      \numberwithin{equation}{section}
      \newcommand{\beq}{\begin{equation}}
      \newcommand{\eeq}{\end{equation}}
      \newcommand{\beqs}{\begin{eqnarray*}}
      \newcommand{\eeqs}{\end{eqnarray*}}
      \newcommand{\beqn}{\begin{eqnarray}}
      \newcommand{\eeqn}{\end{eqnarray}}
      \newcommand{\beqa}{\begin{array}}
      \newcommand{\eeqa}{\end{array}}
      \def\lra{\longrightarrow}
      \def\bc{\begin{center}}
      \def\ec{\end{center}}
      \def\begeq{\begin{equation}}
      \def\endeq{\end{equation}}
      \def\and{\quad{\rm and}\quad}
      \let\lra=\longrightarrow
      \def\mapright\#1{\, \smash{\mathop{\lra}\limits^{\#1}}\, }
      \newtheorem{prop}{Proposition}[section]
      \newtheorem{theo}[prop]{Theorem}
      \newtheorem{lem}[prop]{Lemma}
      \newtheorem{cor}[prop]{Corollary}
      \newtheorem{rem}[prop]{Remark}
      \newtheorem{defi}[prop]{Definition}
      \newtheorem{conj}[prop]{Conjecture}
\begin{document}

       \title{  $4d$   steady  gradient Ricci solitons  with   nonnegative curvature away from a compact set}

     \author{Ziyi  $\text{Zhao}^{\dag}$ and Xiaohua $\text{Zhu}^{\ddag}$}

 \address{BICMR and SMS, Peking
 University, Beijing 100871, China.}
 \email{ 1901110027@pku.edu.cn\\\ xhzhu@math.pku.edu.cn}

 \thanks {$\ddag$ partially supported  by National Key R\&D Program of China  2020YFA0712800 and  NSFC 12271009.}
 \subjclass[2000]{Primary: 53E20; Secondary: 53C20,  53C25, 58J05}

 \keywords{Steady Ricci soliton, Ricci flow,  ancient  $\kappa$-solution,   Bryant  Ricci soliton}

     \begin{abstract} In the paper,  we  classify    the blow-down solutions for $4d$ noncompact $\kappa$-noncollapsed  steady  gradient   Ricci solitons $(M, g)$ with  $\rm{Km}\geq 0$   and $\rm{Ric}> 0$ away from a compact set of $M$.     As an application,    we  prove:  any  $4d$ noncompact $\kappa$-noncollapsed   steady    gradient Ricci soliton   with   nonnegative sectional curvature must be  the Bryant Ricci  soliton up to scaling if it admits    a sequence of rescaled flows,  which   converges  subsequently to  a  splitting limit flow $ \bar g(t)= h(t) +ds^2$  on $N\times \mathbb R$, when  $N$ is  a compact manifold.
             \end{abstract}

       \date{}

    \maketitle


    \setcounter{section}{-1}

  \section{Introduction}

   Steady  (gradient)  Ricci soliton,  as  a   singular model  of type \uppercase\expandafter{\romannumeral2} of Ricci flow,  has been extensively  studied.  In dimensions 2,    Hamilton  \cite{Ham-singular, CLN}  proved that  the  cigar solution is  the  only  2d  steady  Ricci soliton  up to scaling.   In dimensions  3,  Perelman conjectured  that the  Bryant  Ricci soliton is the  only  3d $\kappa$-noncollapsed   steady  Ricci soliton  up to scaling \cite{P}. The conjecture has been proved by    Brendle \cite{Bre-3d}.  These  two Ricci solitons  are both    rotationally symmetric.    In  dimension $n\ge 4$,  besides the  Bryant soliton \cite{Bry},   Lai  recently constructed  a family of ${\rm SO}(n-1)$-symmetric  solutions with  positive curvature operator \cite{Yi-flyingwings}. Thus it is interesting to classify    steady Ricci solitons  under suitable conditions of  symmetry and curvature.

  To characterize   the Bryant Ricci  soliton,  Brendle introduced  the following notion \cite{Bre-high}.

   \begin{defi}\label{asymptotically-cylindrical}  A complete  steady  gradient  Ricci soliton $(M^n, g, f)$ of dimension $n$  is  called asymptotically cylindrical if the following holds:

 (i) The scalar curvature satisfies $\frac{C_1}{\rho(x)}\leq R(x)\leq \frac{C_2}{\rho(x)}$ as $\rho(x)>>1$,  where $C_1$, $C_2$ are two positive constants.

  (ii) Let $p_i$ be an arbitrary sequence of marked points going to infinity. Consider the rescaled metrics
  \begin{align}\label{rescaling-flow}g_{p_i}(t)=r_i^{-1}\phi^*_{r_it}(g),
  \end{align}
  where $r_iR(p_i)=1$  and $\phi_{t}$ is a family of transformations generated by $-\nabla f$, the flow $(M,g_{p_i}(t); p_i)$ converges in the Cheeger-Gromov sense to a family of shrinking cylinders $(\mathbb S^{n-1}\times \mathbb{R},~ \bar{g}(t))$, $t\in (0,1)$. The metric  $\bar g(t)$ is given by
  \begin{align}\label{shrinking-cylinder}\bar g(t)= (1-t) g_{\mathbb S^{n-1}(1)} +ds^2,
  \end{align}
  where  $\mathbb S^{n-1}(1)$ is the unit sphere of dimension $(n-1)$  in $\mathbb R^n$.

  \end{defi}

 In \cite{Bre-high},   Brendle proved  that  any  steady   gradient   Ricci soliton   with nonnegative  sectional curvature must be  isometric to the   Bryant  Ricci soliton up to scaling  if it   is asymptotically cylindrical.  Later,  Deng-Zhu found that  the Brendle's result still holds if one of the  two conditions in Definition \ref{asymptotically-cylindrical} is satisfied for  $\kappa$-noncollapsed steady  Ricci solitons with nonnegative  curvature operator \cite {DZ-JEMS, DZ-SCM}. In a very recent paper \cite{ZZ},  the   Brendle's result    has  been  improved by  assuming  the nonnegative  sectional  curvature  away from a compact set of $M$ together  with   $P$-curvature  pinching condition instead of
 nonnegative  sectional curvature on the whole $M$.
  Thus it is a natural question to ask: is  the Brendle's result  true if  the  condition (ii) is satisfied by (at least) one sequence?  In this paper, we give a positive answer  for 4d $\kappa$-noncollapsed steady  Ricci solitons with nonnegative  sectional curvature.

   Let $(M^n, g)$ be a complete noncompact $\kappa$-noncollapsed steady  Ricci soliton  with  curvature operator  $\rm{Rm}\geq 0$ (sectional curvature $\rm{Km}\geq 0$ for $n=4$)  and $\rm{Ric}> 0$  away from a compact set $K$ of $M$.   Let $p_i\rightarrow \infty$ be any sequence in $M$  and  $g_{p_i}(t)$ the rescaled flow of Ricci soliton  $g$ as in  Definition \ref{asymptotically-cylindrical}.  Then   $(M,g_{p_i}(t); p_i)$ converges to  a splitting  flow  in the Cheeger-Gromov sense,
  \begin{align}\label{splitt-solution} \bar g(t)= h(t) +ds^2, ~ {\rm on}~ N\times \mathbb R,
    \end{align}
       where  $h(t)$  ($t\in (-\infty, 0]$)  is an ancient   $\kappa$-solution on an  $(n-1)$-dimensional $N$, see Proposition \ref{Dimension-reduction}.

   The following alternative principle is one of   main results in this paper.

  \begin{theo}\label{main-theorem}
      Let $(M,  g)$ be a $4d$ complete  noncompact $\kappa$-noncollapsed  steady  gradient   Ricci soliton with  $\rm{Km}\geq 0$   and $\rm{Ric}> 0$ away from a compact set $K$ of $M$.   Let $p_i\rightarrow \infty$ be any  sequence in $M$  and  let  $ \bar g(t)= h(t) +ds^2$ be  the   splitting limit flow of    $(M,g_{p_i}(t); p_i)$  as in (\ref{splitt-solution}).  Then either all  $h(t)$ are families  of  $3d$  shrinking  round quotient   spheres, or  all $h(t)$ are   $3d$ noncompact  ancient $\kappa$-solutions.
  \end{theo}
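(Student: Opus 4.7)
The plan is to combine the splitting from Proposition \ref{Dimension-reduction} with the known classification of three-dimensional ancient $\kappa$-solutions of nonnegative curvature operator, and then rule out the lone remaining compact non-round case by a second-scale blow-up argument.

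By Proposition \ref{Dimension-reduction}, $(M,g_{p_i}(t);p_i)$ converges in the pointed Cheeger-Gromov sense to the splitting $\bar g(t)=h(t)+ds^2$ on $N\times \mathbb R$ with $h(t)$ a three-dimensional $\kappa$-noncollapsed ancient solution. The hypothesis $\mathrm{Km}\ge 0$ on $M\setminus K$ passes to Cheeger-Gromov limits at infinity; combined with the flatness of the $\mathbb R$-factor and the fact that in dimension three $\mathrm{Km}\ge 0$ is equivalent to $\mathrm{Rm}\ge 0$, this gives $\mathrm{Rm}\ge 0$ on $(N,h(t))$. By the classification of such three-dimensional $\kappa$-noncollapsed ancient solutions, $(N,h(t))$ must be either (i) a family of shrinking spherical space forms $\mathbb S^3/\Gamma$, (ii) a shrinking round cylinder $\mathbb S^2\times \mathbb R$ or its $\mathbb Z_2$-quotient, (iii) the three-dimensional Bryant soliton, or (iv) Perelman's compact rotationally symmetric ancient $\kappa$-solution on $S^3$. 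Case (i) gives the first alternative of the theorem and cases (ii), (iii) give the second, so it remains to rule out (iv).

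To exclude (iv), I argue by contradiction. Assume $h(t)$ is Perelman's compact solution; then as $t\to-\infty$ it develops arbitrarily long necks, and one can find $(\tilde q_k,t_k)\in N\times (-\infty,0]$ with $t_k\to-\infty$ such that parabolic rescalings of $h$ at $(\tilde q_k,t_k)$ by $R(\tilde q_k,t_k)$ subconverge to a shrinking round cylinder. Composing with the original convergence $(M,g_{p_i}(t))\to (N\times \mathbb R,h(t)+ds^2)$ and diagonalising in $i$ and $k$ produces a new sequence $\tilde p_j\to\infty$ in $M$ whose rescaled flow $(M,g_{\tilde p_j}(t);\tilde p_j)$ converges in Cheeger-Gromov sense to the product $(\mathbb S^2\times \mathbb R^2,\bar h_\infty(t)+du^2)$ of a shrinking round cylinder with a flat line.

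The main obstacle is to extract a genuine contradiction from the coexistence of these two limits, since Proposition \ref{Dimension-reduction} applied to $\tilde p_j$ gives a splitting $N'\times \mathbb R$ with $N'=\mathbb S^2\times \mathbb R$, which in isolation is a legitimate $3d$ noncompact ancient $\kappa$-solution. The key idea is that the splitting $\mathbb R$-factor must in both cases coincide with the asymptotic direction of $-\nabla f/|\nabla f|$, yet in Perelman's non-self-similar solution the neck direction inside $N$ is orthogonal to that asymptotic direction. Using the steady soliton identities $R+|\nabla f|^2=1$ and $\nabla R=-2\mathrm{Ric}(\nabla f,\cdot)$ along integral curves of $-\nabla f$, together with the hypothesis $\mathrm{Ric}>0$ on $M\setminus K$ (which controls the dimension of the Ricci kernel achievable in any limit), one should be able to show that the two splitting directions cannot be simultaneously aligned with $\nabla f$, producing the required contradiction. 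Executing this cross-scale comparison cleanly, which is precisely where the non-self-similarity of Perelman's compact solution must be exploited, is the most delicate step of the argument.
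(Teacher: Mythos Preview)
Your sketch has two gaps, one structural and one technical.

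\textbf{Structural gap.} You read the theorem as a statement about a single sequence: classify the possible $(N,h(t))$ and eliminate Perelman's compact solution. But the theorem is a \emph{global dichotomy}: either \emph{every} blow-up sequence yields a shrinking quotient sphere, or \emph{every} blow-up sequence yields a noncompact split. Eliminating case (iv) from your list does not prove this. After ruling out (iv) you would still have to show that one cannot have a sequence $p_i$ producing a round $S^3/\Gamma$ and a different sequence $q_i$ producing $\mathbb S^2\times\mathbb R$ or the Bryant soliton. That is exactly the content of Proposition \ref{noncompact-ancient-solution} (one noncompact split forces all splits to be noncompact), whose proof occupies most of Section 3 and relies on the level-set machinery of Section 2 (curvature decay in Lemma \ref{compact-R-decay}, compactness of level sets in Proposition \ref{compact-levelset-compact}, the diameter function $F_\epsilon$). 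The paper's proof then finishes via Proposition \ref{compact-limit-ancient-solution}: if all $F_\epsilon$ are uniformly bounded, the type II compact split is excluded by pulling the diameter growth (\ref{infty-diam}) of Perelman's solution back along the soliton flow $\phi_T$ to manufacture points with $F>50C$, contradicting the bound. Your outline contains no analogue of either proposition.

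\textbf{Technical gap.} Your proposed mechanism for excluding (iv) does not produce a contradiction. A second blow-up at neck points of Perelman's solution yields $\mathbb S^2\times\mathbb R^2$, and you correctly note that this is a legitimate limit with $N'=\mathbb S^2\times\mathbb R$. You then try to force a contradiction by arguing that the $\mathbb R$-factor must be tangent to $\nabla f/|\nabla f|$ while the neck direction is orthogonal to the original $\mathbb R$. But in $\mathbb S^2\times\mathbb R^2$ there is a two-dimensional family of split directions; the limit of $\nabla f/|\nabla f|$ along $\tilde p_j$ just picks out \emph{one} line in that $\mathbb R^2$, and there is no reason it must coincide with either the original $\mathbb R$ or the neck axis. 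The soliton identities you cite do not force the alignment you need. The paper never attempts this kind of direction argument; instead it exploits the \emph{scalar} quantity $\mathrm{Diam}(h(t))R^{1/2}$, which is monotone along Perelman's solution (Proposition \ref{Perelman-T}) and transports cleanly under $\phi_t$ back to $M$, to derive a quantitative contradiction with the diameter bound on level sets.
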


  We note that both cases  can happen in Theorem \ref{main-theorem}  as seen in  following examples.  For any $2n\ge 4$  and each $Z_k$-group,  Appleton  \cite{App} has  constructed  an example of
   noncompact $\kappa$-noncollapsed   steady   gradient  Ricci soliton with  $\rm{Rm}>0$ on $M\setminus K$, where  all split    ancient   $\kappa$-solutions $h(t)$  form   families  of    shrinking  quotient  spheres  of dimension  $(2n-1)$ with $Z_k$-group.
 In each of   Lai's  examples \cite{Yi-flyingwings} of   noncompact $\kappa$-noncollapsed  steady   gradient  Ricci solitons  with  $\rm{Rm}>0$ on $M$,  all split    ancient   $\kappa$-solutions  $h(t)$ are  noncompact.

  We also note that  $3d$ noncompact ancient $\kappa$-solution (non-flat)  has  been recently classified by    Brendle \cite{Bre-3d-noncpt} (also see \cite{BK}).  Namely,   such soliton   is  isometric to  either  a flow of  $2d$ shrinking cylinders, or a  flow of the Bryant Ricci soliton. On the other hand, by  recent results of Chow-Deng-Ma \cite[Theorem 1.3,  Claim 6.4]{CDM},   both  flows  of the Bryant Ricci soliton and   $2d$ shrinking cylinders  will always  arise  in the second case in  Theorem \ref{main-theorem}, since Theorem \ref{main-theorem} in particular  implies  that  $(M,  g)$ satisfies the canonical model condition in   Chow-Deng-Ma's paper. Thus  we can actually classify    the blow-down solutions for $4d$ noncompact $\kappa$-noncollapsed  steady  gradient   Ricci solitons with  $\rm{Km}\geq 0$   and $\rm{Ric}> 0$ away from a compact set $K$ of $M$.

  As an application of Theorem \ref{main-theorem},  we prove

  \begin{cor}\label{coro}
      Let $(M, g)$ be a $4d$ noncompact $\kappa$-noncollapsed  steady  gradient   Ricci soliton
     with   nonnegative sectional curvature.    Suppose  that there exists a sequence of rescaled flows  $(M,g_{p_i}(t); p_i)$ of  $(M,g)$
which   converges  subsequently to  a family of shrinking  quotient cylinders.
       Then $(M, g)$ is isometric to  the $4d$ Bryant  Ricci soliton up to scaling. Moreover,  the result is still true  only  by assuming  the existence of  $3d$  compact split limit flows $(N, h(t))$ as in (\ref{splitt-solution}).

        \end{cor}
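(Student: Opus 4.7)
The plan is to combine the main Theorem \ref{main-theorem} with the existing rigidity theorems for asymptotically cylindrical steady Ricci solitons. I would proceed in three steps: upgrade the curvature hypothesis to $\text{Ric}>0$, apply Theorem \ref{main-theorem} to deduce asymptotic cylindricality, and then invoke the rigidity theorem to identify $(M,g)$ as the Bryant soliton.

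First, I would show that $\text{Ric}>0$ on all of $M$ via Hamilton's strong maximum principle. Since $(M,g)$ is a steady gradient Ricci soliton with $\text{Km}\ge 0$, if $\text{Ric}$ has a zero eigenvalue at some point, then the universal cover of $M$ splits isometrically as $\tilde N^{4-k}\times\mathbb R^k$ with $\tilde N$ a lower-dimensional steady soliton factor. Enumerating the $4$-dimensional possibilities ($\mathbb R^4$, $\mathbb R^2\times$cigar, $\mathbb R\times$ $3$-dimensional Bryant, or quotients thereof), in each case the rescaled-flow limit at infinity has the form (compact factor of dimension $\le 2$)$\times$(flat factor of dimension $\ge 2$), which is never of the form $\mathbb S^3/\Gamma\times\mathbb R$. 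This contradicts the assumption, so $\text{Ric}>0$ throughout $M$.

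Second, the curvature hypothesis of Theorem \ref{main-theorem} now holds on $M$ with $K=\emptyset$, and the theorem supplies a uniform dichotomy for the splitting limit along every sequence $p_i\to\infty$: either every sequence yields a compact $h(t)$ (a shrinking quotient sphere), or every sequence yields a noncompact one. By assumption some sequence realizes the compact alternative, hence every sequence does, and $(M,g)$ is asymptotically cylindrical in the sense of Definition \ref{asymptotically-cylindrical}. Then I would invoke Brendle's rigidity theorem \cite{Bre-high}, in the form extended by Deng-Zhu \cite{DZ-JEMS, DZ-SCM} and \cite{ZZ}, to conclude $(M,g)$ is the $4d$ Bryant soliton up to scaling. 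For the ``moreover'' statement, $\text{Ric}>0$ is assumed outright so Step 1 is free; a compact $3d$ ancient $\kappa$-solution is by Perelman's classification a family of shrinking quotient spheres, hence its product with $\mathbb R$ is a shrinking quotient cylinder, reducing to the main case.

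The main obstacle is the final application of the rigidity theorem. The existing classifications of asymptotically cylindrical steady solitons are formulated under nonnegativity of the full curvature operator (possibly only away from a compact set, as in \cite{ZZ}), whereas the corollary assumes only nonnegative sectional curvature in dimension $4$. Closing this gap requires either verifying that the refined version from \cite{ZZ} applies in the $4d$ sectional-curvature setting, or a direct maximum-principle argument along the soliton flow showing that the cylindrical asymptotics propagate nonnegativity of the curvature operator inward from infinity (using that the curvature operator of $\mathbb S^3/\Gamma\times\mathbb R$ is nonnegative and, by Step 1, that there is no isometric splitting of $M$). The core technical work of the corollary would be concentrated here.
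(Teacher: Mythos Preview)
Your overall strategy matches the paper's, but there are two concrete gaps and one misidentified obstacle.

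\textbf{Quotient spheres versus spheres.} Theorem \ref{main-theorem} only yields that every split limit $(N,h(t))$ is a family of shrinking \emph{quotient} spheres, whereas Definition \ref{asymptotically-cylindrical} and Brendle's theorem require convergence to the standard cylinder $\mathbb S^{3}\times\mathbb R$. You pass directly from ``shrinking quotient spheres'' to ``asymptotically cylindrical'' without bridging this. The paper inserts an essential intermediate step: once $\mathrm{Ric}>0$ is known, Lemma \ref{compact-R-decay} gives uniform curvature decay, so $f$ has a unique critical point and each level set $\Sigma_r=f^{-1}(r)$ is diffeomorphic to $\mathbb S^3$. Since (by Remark \ref{N-levelset} and the argument of Lemma \ref{compact-ancient-solution}) the level sets converge to $(N',h'(0))$, one concludes $N'\cong\mathbb S^3$, i.e.\ the quotient is trivial. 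Only then can \cite[Lemma 6.5]{DZ-SCM} and \cite{Bre-high} be invoked.

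\textbf{The ``moreover'' clause.} Your reduction is incorrect: it is \emph{not} true that every compact $3d$ ancient $\kappa$-solution is a family of shrinking quotient spheres. Perelman's compact type II solution on $\mathbb S^3$ (and its quotients) is the counterexample, and excluding it is precisely the content of Proposition \ref{compact-limit-ancient-solution} (hence of Theorem \ref{main-theorem}). The correct route, as in the paper, is to apply Theorem \ref{main-theorem} first: the existence of one compact split limit forces all split limits to be shrinking quotient spheres; then the level-set argument above upgrades ``quotient sphere'' to ``sphere''.

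\textbf{On your ``main obstacle''.} The sectional-versus-operator issue you flag is not where the work lies. The paper does no curvature-propagation argument here; once condition (ii) of Definition \ref{asymptotically-cylindrical} is established (with genuine $\mathbb S^3$, as above), \cite[Lemma 6.5]{DZ-SCM} supplies condition (i), and Brendle's theorem \cite{Bre-high} applies directly under positive sectional curvature. Your Step 1 (splitting the universal cover of $M$ and enumerating factors) is a legitimate alternative to the paper's argument, which instead shows that if $\mathrm{Ric}$ were not strictly positive the limit $(N,h(t))$ itself would split off a line, contradicting $\tilde N\cong\mathbb S^3$; both approaches work, but the paper's avoids the case analysis.
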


   Our proof of Theorem \ref{main-theorem}  depends on a  deep  classification result   for $3d$ compact $\kappa$-solutions  proved by Brendle-Daskalopoulos-Sesum \cite{BDS} (also see Theorem \ref{Brendle}). But we guess  that  analogs  of Theorem \ref{main-theorem} and Corollary \ref{coro} hold  both
   for general  dimension (cf.  Conjecture \ref{high-Th} and Conjecture \ref{high-Coro}).

The paper is organized as follows.  In Section 1, we prove a splitting result   for  any limit flow of rescaled  flows sequence from a  $\kappa$-noncollapsed steady gradient  Ricci soliton  $(M, g)$ with
       $\rm{Rm}\geq 0$ on  $M\setminus K$, see Proposition  \ref{Dimension-reduction}.   In Section 2,  we first get a decay estimate of curvature and then study   the level set  geometry  of $(M, g)$  by assuming  the existence  of    compact  split ancient  $\kappa$-solution $(N, h(t))$, see Lemma \ref{compact-R-decay},  Proposition \ref{compact-levelset-compact}, etc.    All  results    in this section hold for any dimension. In Section 3, we focus on a $4d$  steady  Ricci soliton  to get a diameter  estimate of $(N, h(0))$ for all   3d  split flows   $(N, h(t))$, see
       Proposition \ref{noncompact-ancient-solution}. Both of  Theorem \ref{main-theorem} and  Corollary \ref{coro}.
will be proved in Section 4.

  \section{A splitting theorem}

  A complete Riemannian metric $g$ on  $M$ is  called a gradient Ricci soliton if there exists a smooth function $f$ (which is called a defining function)  on $M$ such that
  \begin{equation}\label{Def-soliton}
  R_{ij}(g)+\rho g_{ij}=\nabla_{i}\nabla_{j}f,
  \end{equation}
  where $\rho\in \mathbb{R}$ is a constant. The gradient Ricci soliton is called expanding,  steady and shrinking  according to  $\rho >,  = ,  <0$,  respectively.     These three types  of   Ricci solitons  correspond to three different blow-up solutions  of Ricci flow  \cite{Ham-singular}.

   In case of  steady Ricci solitons, we can rewrite (\ref{Def-soliton}) as
    \begin{align}\label{soliton-equation} 2\operatorname{Ric}(g)=L_Xg,
    \end{align}
    where  $L_X $ is the Lie operator along   the gradient vector field  (VF)  $X = \nabla f$  generalized by $f$.
  Let $\{\phi^*_t\}_{t\in(-\infty,\infty)}$ be   a  1-ps  of transformations   generated by $-X$.   Then
  $g(t)=\phi^*_t(g)$ ($t\in(-\infty,\infty))$ is a solution of Ricci flow. Namely,  $g(t)$ satisfies
  \begin{align}\label{ricci-equ}\frac{\partial g}{\partial t}=-2{\rm Ric}(g), ~ g(0)=g.
  \end{align}
  For simplicity,  we call  $g(t)$  the soliton  Ricci flow of  $(M, g)$.

   By (\ref{soliton-equation}), we have
   \begin{align}\label{R-monotonicity}
       \langle\nabla R,\nabla f\rangle=- 2\operatorname{Ric}(\nabla f,\nabla f),
   \end{align}
   where $R$ is the scalar curvature of $g$.
   It follows
   $$R+|\nabla f|^2={\rm Const.}$$
    Since $R$ is alway positive (\cite{Zh, Ch}),  the above equation can be normalized by
   \begin{align}\label{scalar-equ} R+|\nabla f|^2=1.
   \end{align}

 We recall  that  an ancient $\kappa$-solution is a  $\kappa$-noncollapsed  solution of Ricci flow (\ref{ricci-equ})   with  ${\rm R_m}(\cdot, t)\ge 0$ defined for any $t\in (-\infty, T_0]$.
The following result is a version of  Perelman's compactness theorem for  higher dimensional  ancient  $\kappa$-solutions.

 \begin{theo}\label{diemsion-reduction-positive-curved}
     Let $\left(M^n, g_{i}(t) ; p_i\right)$ $(n\ge 3)$  be any  sequence of   $n$-dimensional   ancient  $\kappa$-solutions on a noncompact manifold  $M$ with $R\left(p_i, 0\right)=1$. Then $(M, g_i(t);~$
     $ p_i)$ subsequently converge to a  splitting flow $(N \times \mathbb{R}, \bar {g}(t);  p_\infty)$ in the Cheeger-Gromov sense. Here
      \begin{align}\label{splitting-2} \bar {g}(t)=h(t) + ds^2,
       \end{align}
     and $(N, h(t))$ is an $(n-1)$-dimensional  ancient $\kappa$-solution.
 \end{theo}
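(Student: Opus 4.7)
The plan is to adapt Perelman's compactness-plus-dimension-reduction strategy to higher dimensions. The first step is to establish uniform curvature bounds in a neighborhood of each basepoint $p_i$. For ancient $\kappa$-solutions with nonnegative curvature operator, the Li--Yau--Hamilton Harnack estimate makes the scalar curvature monotone in backward time, while Perelman's local curvature estimate (which uses $\kappa$-noncollapsedness together with $\mathrm{Rm}\geq 0$) gives $R\leq C(r,T)$ on the parabolic cylinder $B_{g_i(0)}(p_i,r)\times[-T,0]$, with $C(r,T)$ independent of $i$ thanks to the normalization $R(p_i,0)=1$. Shi's derivative estimates then upgrade these into uniform bounds on all covariant derivatives of curvature.

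Combined with the injectivity radius lower bound supplied by $\kappa$-noncollapsedness, Hamilton's Cheeger--Gromov compactness theorem for the Ricci flow extracts a subsequential limit $(M_\infty,g_\infty(t);p_\infty)$, which is itself a complete ancient $\kappa$-solution with $\mathrm{Rm}\geq 0$ and $R(p_\infty,0)=1$. At this stage the remaining task is purely geometric: to exhibit a splitting $M_\infty=N\times\mathbb{R}$ through $p_\infty$.

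For the splitting I would construct a line in $M_\infty$ through $p_\infty$ and then invoke the Cheeger--Gromoll splitting theorem. Noncompactness of each $(M,g_i(0))$ with $\mathrm{Rm}\geq 0$ furnishes a minimizing ray $\gamma_i$ emanating from $p_i$; to produce an opposite ray, the key input is that for a noncompact $n$-dimensional ancient $\kappa$-solution with $\mathrm{Rm}\geq 0$ the asymptotic scalar curvature ratio vanishes (a consequence of Perelman's analysis), so one can select points $q_i$ with $d_{g_i(0)}(p_i,q_i)\to\infty$ whose minimizing segments to $p_i$, concatenated with $\gamma_i$, converge in the Cheeger--Gromov limit to a single bi-infinite geodesic through $p_\infty$. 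The Cheeger--Gromoll theorem then yields $g_\infty(t)=h(t)+ds^2$ on $N\times\mathbb{R}$, and because $g_\infty(t)$ is itself an ancient $\kappa$-solution, the factor $(N,h(t))$ is automatically an $(n-1)$-dimensional ancient $\kappa$-solution. The main obstacle is establishing this two-sided line: one must ensure that $\gamma_i$ and the segments through $q_i$ align with genuinely opposite tangent vectors at $p_\infty$ in the limit, which is the heart of Perelman's dimension-reduction argument and requires a delicate use of the vanishing asymptotic scalar curvature ratio together with Toponogov-type comparison under the normalization $R(p_i,0)=1$.
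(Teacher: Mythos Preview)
Your overall strategy---uniform curvature bounds via the Harnack inequality and Perelman's local compactness, then Hamilton's Cheeger--Gromov compactness, then a line construction followed by Cheeger--Gromoll splitting---is exactly what the paper invokes: it cites \cite[Theorem~3.3]{DZ-TAMS} for the convergence and Hamilton \cite[Lemma~22.2]{Ham-singular} together with Perelman's asymptotic volume ratio estimate \cite[Proposition~41.13]{KL} for the splitting. So the architecture matches.

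There is, however, one genuine error in your key geometric input. You write that for a noncompact ancient $\kappa$-solution with $\mathrm{Rm}\geq 0$ ``the asymptotic scalar curvature ratio vanishes.'' This is false: Perelman's theorem is that the asymptotic \emph{volume} ratio vanishes,
\[
\mathrm{AVR}(g)=\lim_{r\to\infty}r^{-n}\operatorname{Vol}(B(p,r))=0,
\]
and in fact the asymptotic scalar curvature ratio is \emph{infinite}, $\limsup_{x\to\infty}R(x)\,d^2(o,x)=\infty$ (compare (\ref{ASCR}) in the paper). The two statements have opposite geometric content, and the distinction is what drives the line construction. From $\mathrm{AVR}=0$ one deduces, as the paper carries out explicitly in the proof of Proposition~\ref{Dimension-reduction} (see (\ref{r-R-comparison}) and (\ref{r-decay-upper})), that the curvature scale $R(p_i)^{-1/2}$ is negligible compared to $d(p_i,o)$; after normalizing $R(p_i,0)=1$ the fixed base point $o$ therefore escapes to infinity, a minimizing segment from $p_i$ toward $o$ supplies the second half-line, and Toponogov comparison under nonnegative curvature forces the angle at $p_i$ to open to $\pi$. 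If instead ASCR vanished, the manifold would be asymptotically curvature-flat and Hamilton's point-picking mechanism would not be available at all. Replace ``asymptotic scalar curvature ratio vanishes'' by ``asymptotic volume ratio vanishes'' (equivalently, ASCR $=\infty$) and your sketch becomes correct.
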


 The convergence of  $\left(M, g_i(t) ; p_i\right)$ comes from \cite[Theorem 3.3]{DZ-TAMS}.  The splitting property in
 (\ref{splitting-2}) can also be  obtained  by   Hamilton's argument  \cite[Lemma 22.2]{Ham-singular}  with the  help of Perelman's asymptotic volume ratio estimate for $\kappa$-solutions \cite[Proposition 41.13]{KL}.
 In fact,  for a sequence of  rescaling Ricci flows  arising from a  steady Ricci soliton,   we can improve Theorem \ref{diemsion-reduction-positive-curved}  under a  weaker curvature condition as follows.

 \begin{prop}\label{Dimension-reduction}
     Let $(M^n, g)$  $(n\ge 3)$ be a noncompact $\kappa$-noncollapsed  steady  gradient Ricci soliton with
       $\rm{Rm}\geq 0$ away from $K$.   Let   $p_i\rightarrow \infty$ and $(M, g_{p_i}(t); p_i)$  a sequence of rescaled  flows with   $ R_{p_i}\left(p_i, 0\right)=1$ as in (\ref{rescaling-flow}) .  Then $(M, g_{p_i}(t); ~$
       $p_i)$ subsequently converge to a  splitting flow $(N \times \mathbb{R}, \bar {g}(t); p_\infty)$ as
      in  Theorem \ref{diemsion-reduction-positive-curved}. Moreover,   for $n=4$, $\rm{Rm}\geq 0$ can be weakened to $\rm{Km}\geq 0$ away from $K$.

 \end{prop}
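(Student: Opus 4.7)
The plan is to establish subsequential Cheeger--Gromov convergence of the rescaled flows $(M, g_{p_i}(t); p_i)$ to an ancient $\kappa$-solution, and then apply Theorem \ref{diemsion-reduction-positive-curved} to that limit to obtain the splitting. The essential difference from the setting of Theorem \ref{diemsion-reduction-positive-curved} is that $(M,g)$ is only assumed to have nonnegative curvature \emph{outside} the compact set $K$; however, because the basepoints escape to infinity, $K$ is pushed to spatial infinity in the rescaled picture and the limit inherits global nonnegativity.

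\textbf{Step 1: local curvature bounds.} The normalization $R+|\nabla f|^{2}=1$ gives $R_{g}\leq 1$ on $M$. On $M\setminus K$, the trace inequality $|\mathrm{Rm}|_{g}\leq C R_{g}$ (and, in the 4d case, the analogous bound valid under $\mathrm{Km}\geq 0$) then yields a pointwise bound $|\mathrm{Rm}|_{g}\leq C$ outside $K$. The rescaled flow satisfies $|\mathrm{Rm}|_{g_{p_i}(t)}(x)=r_{i}\,|\mathrm{Rm}|_{g}(\phi_{r_i t}(x))$ and is normalized so that $R_{g_{p_i}}(p_i,0)=1+o(1)$. In the geometrically interesting case $r_{i}\to\infty$, a parabolic neighborhood of fixed rescaled radius around $(p_i,0)$ corresponds to a ball of radius $\sim\sqrt{r_{i}}$ in the original metric, and I must show that $R_{g}$ remains comparable to $R_{g}(p_i)$ on this large ball. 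I would derive this by combining Hamilton's trace Harnack inequality for the soliton Ricci flow $g(t)=\phi_{t}^{*}g$ (valid in the $\mathrm{Rm}\geq 0$ region), the soliton identity $\nabla R=-2\,\mathrm{Ric}(\nabla f)$, and a Perelman-style point-picking contradiction using the global bound $R_{g}\leq 1$ together with $\kappa$-noncollapsing to preclude curvature blow-up.

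\textbf{Step 2: compactness and structure of the limit.} Once Step~1 is in hand, Hamilton's Cheeger--Gromov compactness theorem for pointed Ricci flows (as used in Theorem \ref{diemsion-reduction-positive-curved}) extracts a subsequential limit $(M_{\infty},\bar g(t);p_{\infty})$ defined on $(-\infty,0]$, which is $\kappa$-noncollapsed by standard lower semicontinuity. Since $d_{g_{p_i}(0)}(p_i,K)=r_{i}^{-1/2}\,d_{g}(p_i,K)\to\infty$ --- a byproduct of the bounds in Step~1 --- the exceptional set $K$ disappears at spatial infinity in the limit, so $(M_{\infty},\bar g(t))$ carries $\mathrm{Rm}_{\bar g}\geq 0$ (resp.\ $\mathrm{Km}_{\bar g}\geq 0$ in the 4d case) everywhere. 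It is therefore an $n$-dimensional ancient $\kappa$-solution.

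\textbf{Step 3: splitting and main obstacle.} Apply Theorem \ref{diemsion-reduction-positive-curved} to $(M_{\infty},\bar g(t);p_{\infty})$: Perelman's vanishing of the asymptotic volume ratio for $\kappa$-noncollapsed ancient solutions with $\mathrm{Rm}\geq 0$, combined with Hamilton's Lemma~22.2 splitting argument, produces the desired decomposition $\bar g(t)=h(t)+ds^{2}$ on $N\times\mathbb{R}$ with $h(t)$ an $(n-1)$-dimensional ancient $\kappa$-solution. In the 4d case with only $\mathrm{Km}\geq 0$, the same conclusion follows via Cheeger--Gromoll splitting (which requires only nonnegative sectional curvature plus a line), the line being produced from $\mathrm{AVR}=0$ together with $\kappa$-noncollapsing through a Toponogov-type argument. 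The hard part will be Step~1: in the regime $r_{i}\to\infty$, uniformly controlling the rescaled curvature on a unit parabolic neighborhood of $(p_i,0)$ reduces to a Harnack-type comparison of $R_{g}$ on the original-metric ball of radius $\sim\sqrt{r_{i}}$ around $p_i$, and this delicate comparison requires the full force of the ancient-flow Harnack inequality married to the steady soliton structure.
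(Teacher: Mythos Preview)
Your plan has a genuine gap in Step~3, and the unjustified claim in Step~2 is the same gap in disguise.

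First, Theorem~\ref{diemsion-reduction-positive-curved} is a statement about a \emph{sequence} of pointed ancient $\kappa$-solutions converging to a splitting limit; it says nothing about a \emph{single} ancient $\kappa$-solution $(M_\infty,\bar g(t);p_\infty)$ splitting off a line. Indeed, the Bryant soliton is itself an ancient $\kappa$-solution that does not split. So once you have passed to the limit in Step~2, you cannot simply ``apply Theorem~\ref{diemsion-reduction-positive-curved}'' to that limit. Likewise, Perelman's $\mathrm{AVR}=0$ plus Hamilton's Lemma~22.2, run on the limit space, only locate points $q_j\to\infty$ in $M_\infty$ around which a \emph{further} rescaled limit splits; they do not produce a line through $p_\infty$ in $(M_\infty,\bar g(0))$.

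The splitting at $p_\infty$ must instead be extracted from the \emph{original} sequence, by showing that minimal geodesics from a fixed $o\in M$ to $p_i$ yield a line on the limit. For this Toponogov/Cheeger--Gromoll argument one needs the scale comparison
\[
R(p_i)^{-1/2}\big/\,d_g(p_i,o)\;\longrightarrow\;0,
\]
equivalently $R(p_i)^{1/2}d_g(p_i,o)\to\infty$ for \emph{every} sequence $p_i\to\infty$. This is exactly the content of your Step~2 claim $d_{g_{p_i}(0)}(p_i,K)\to\infty$, which you call a ``byproduct of the bounds in Step~1''. It is not: local Harnack/curvature comparability on $B_g(p_i,DR(p_i)^{-1/2})$ says nothing about how $R(p_i)^{-1/2}$ compares to $d_g(p_i,o)$. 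A priori $R(p_i)$ could decay faster than $d_g(p_i,o)^{-2}$ along some sequence, in which case $K$ is \emph{not} pushed to infinity and no line appears.

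The paper fills this gap by a chain of estimates that you are missing: (i) $\mathrm{ASCR}(g)=\infty$, proved by contradiction using the soliton structure (if $R\le C\rho^{-2}$ then $R\to 0$ uniformly, hence $f$ grows linearly by \cite{CDM}, hence $R\ge C_0\rho^{-1}$ by \cite[Theorem~6.1]{DZ-JEMS}, contradiction); (ii) $\mathrm{AVR}(g)=0$, obtained by applying Hamilton's point-picking to $(M,g)$ itself (not to the limit) and invoking Perelman's volume ratio result on the resulting split ancient limit; (iii) the two-sided comparison $r(p_i)\sim R(p_i)^{-1/2}$ between curvature scale and the volume-ratio scale $r(p_i)$ of (\ref{maximal-r}); and finally (iv) $r(p_i)/d_g(p_i,o)\to 0$ from $\mathrm{AVR}=0$ and volume comparison. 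Combining (iii) and (iv) gives $R(p_i)^{-1/2}/d_g(p_i,o)\to 0$, which is what drives both the disappearance of $K$ and the line construction. Your Step~1 program, even if completed, would at best recover local curvature bounds (i.e.\ the convergence part), but not this scale comparison; and your Step~3 would still not split the limit at the correct basepoint.
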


 \begin{proof} Since $\rm{Km}\geq 0$ on $M\setminus K$, we have  the Harnack estimate by (\ref{R-monotonicity}),
     \begin{align}\label{harnack-esti}\frac{d}{d t} R(x, t) \geq 0, ~{\rm on}~M\setminus K.
       \end{align}
   Then  according to the proof of Theorem \ref{diemsion-reduction-positive-curved} (see Lemma 3.5-3.7  for details there),  for the convergence part in the proposition,   we   only need  to show that the following asymptotic scalar curvature  estimate,
    \begin{align}\label{ASCR}
          { \rm limsup}_{x\rightarrow \infty} R(x)d^2(o,x)=\infty,
       \end{align}
       where $o\in M$ is a fixed point.
    As a consequence,
 the   rescaled flow   $(M,  g_{p_i}(t); p_i)$ has uniformly  local curvature   estimate, and therefore
      $(M,  g_{p_i}(t); p_i)$ subsequently converges  to a limit   ancient  $\kappa$-solution $(M_\infty, \bar g(t); p_\infty)$.

 We note that  (\ref{ASCR}) is true for any  ancient  $\kappa$-solution  by    Perelman's result of asymptotic zero volume ratio \cite{P, KL}  (cf. \cite[Corollary 2.4]{DZ-TAMS}).  In our case, we   only have   $\rm{Rm}\geq 0$   away from $K$. We will use a different argument to prove  (\ref{ASCR})  below.

        On the contrary,  we  suppose  that  (\ref{ASCR})  is not true.   Then there exists a constant $C>0$, such that
          \begin{align}\label{contradiction-1}R(x)\leq \frac{C}{d^2(o,x)}=o( \frac{1}{d(o,x)}).
       \end{align}
       In particular, the scalar curvature   decays to zero  uniformly.  Due to a result in  \cite[Theorem 2.1]{CDM},  we know that there are two
       constants $c_1, c_2>0$ such that
       \begin{align}\label{linear-f}
       c_1\rho(x)\le f(x)\le c_2\rho(x).
       \end{align}
       Thus by   \cite[Theorem 6.1]{DZ-JEMS} with the help of  (\ref{contradiction-1}) and (\ref{linear-f}), we get
        $$R(x)\geq \frac{C_0}{d(o,x)},$$
        for some constant $C_0$.  But this contradicts to  $(\ref{contradiction-1})$.
   Hence  (\ref{ASCR}) is true.

  In the following,  our goal is to   show   that $\bar g(t)$  is of form  (\ref{splitting-2}).
   First we  prove the volume ratio estimate,
       \begin{align}\label{AVR}
           {\rm AVR}(g)=\lim_{r\rightarrow \infty}\frac{ {\rm Vol}(B(p,r))}{r^n}=0.
       \end{align}

   By (\ref{ASCR}), we can use  the Hamilton's argument in  \cite[Lemma 22.2]{Ham-singular} to find    sequences of points  $q_i\rightarrow \infty$ and number  $s_i>0$ such that
   $\frac{s_i}{d(q_i, o)}\rightarrow 0$,
    \begin{align}\label{growth-infty}R(q_i)s_i^2\rightarrow \infty,
    \end{align}
     and
   \begin{align}\label{harnack-R}
   R(x)\leq 2R(q_i), ~ \forall~ x\in B(q_i, s_i).
   \end{align}
   Consider  a sequence of  the rescaled flows  $(M, g_{q_i}(t);  q_i)$, $t\in(-s_i,0]$,   such that $ R_{q_i}\left(q_i, 0\right)=1$, where $ R_{q_i}(\cdot, t)$ is the scalar curvature of   $g_{q_i}(t)$. Then by  (\ref{harnack-esti}),  $R_{q_i}(x,t)\leq 2$ whenever $t\in (-s_i,0]$ and $d_{q_i}(q_i, x)\leq   R(q_i)^{\frac{1}{2}}s_i$, where $d_{q_i}(q_i,\cdot)$ is the distance function from  $q_i$ w.r.t   $g_{q_i}(t)$.  It follows that
    $(M, g_{q_i}(t);  q_i)$ with $t\in(-s_i,0]$ converges subsequently to a limit ancient  $\kappa$-solution $(M_\infty, g_\infty(t); q_\infty)$. Moreover,  by (\ref{growth-infty}) and the curvature condition  $\rm{Km}\geq 0$ on $M\setminus K$, one can construct a geodesic line on $(M_\infty, g_\infty(0); q_\infty)$ (cf. \cite[Theorem 5.35]{MT}).  Thus, by the Cheeger-Gromoll  splitting theorem,   $(M_\infty, g_\infty(t); q_\infty)$ is in fact  a splitting  ancient flow  $(N' \times \mathbb{R}, h'(t)+ ds^2;  q_\infty)$, where $(N', h'(t); q_\infty)$ is an  $(n-1)$-dimensional $\kappa$-noncollapsed ancient solution.   Clearly,   $(N', h'(0); q_\infty)$ can not be flat  since $R_\infty(q_\infty,0)=1$, so  $(M_\infty, g_\infty(t); q_\infty)$  is a non-flat ancient solution.  Hence, by  \cite[Proposition 41.13]{KL},  the asymptotic volume ratio of   $(M_\infty, g_\infty(t); q_\infty)$  must be zero.   This  implies    (\ref{AVR}) by the  monotonicity of   volume since the   (\ref{AVR}) is invariant under the rescaling.

  Next we let
  \begin{align}\label{maximal-r}r(p_i)=\sup\{\rho|~{\rm Vol}(B(p_i, \rho))\ge \frac{\omega}{2}\rho^n\}.
  \end{align}
  We prove
  \begin{align}\label{r-R-comparison}
  C_0^{-1}r(p_i)\le R^{-\frac{1}{2}}(p_i)\le C_0r(p_i).
  \end{align}
 In fact,  for the  first inequality in  (\ref{r-R-comparison}),  by the volume comparison,  there is $C_1(D)>0$ for any $D>0$ such that
    $$\operatorname{Vol}(B(x, r(p_i))) \geq C_1^{-1} r(p_i)^n,  ~ \forall ~x \in B(p_i, D r(p_i)).$$
   Then by \cite[Lemma 3.5]{DZ-TAMS},   there is  $C_0(D)>0$ such that
  \begin{align}\label{R-bound-volume}
      R \leq C_0^2 r(p_i)^{-2}, \forall~x\in B(p_i, \frac{D}{2} r(p_i)).
  \end{align}
   Thus we only need to  prove the second inequality of (\ref{r-R-comparison}).

    We  use the above argument in the proof of (\ref{AVR}).   On the contrary,  there is a sequence $p_i \rightarrow \infty$ (still denoted by $\{p_i\}$)  such that
  \begin{align}\label{curvature-less-volume}
      \lim _{i \rightarrow \infty} \frac{R^{-1 / 2}(p_i)}{r(p_i)}=\infty.
  \end{align}
  On the other hand, by $(\ref{R-bound-volume})$ and $(\ref{harnack-esti})$, we have
   $$R(x, t) \leq C_0 r(p_i)^{-2}, ~\forall ~x \in B(p_i, \frac{D}{2} r(p_i)), t \in(-\frac{D}{2}, 0].$$
   Then the rescaled flow $\left(M, r(p_i)^{-2} g(r(p_i)^{2}t); p_i\right)$ converges subsequently to a limit ancient solution $(M_\infty', g_\infty'(t); p_\infty')$.
   Note that  $r(p_i)<\infty$ for each $p_i$   by $(\ref{AVR})$. Moreover,
    by the volume comparison, it follows
     \begin{align}\label{r-d-order}\lim _{i \rightarrow \infty} \frac{r(p_i)}{d(p_i,o)}=0.
     \end{align}
  Hence,   by (\ref{r-d-order}) and the curvature condition  $\rm{Km}\geq 0$ on $M\setminus K$, one can construct a geodesic line on $(M_\infty', g_\infty'(0); p_\infty')$ (cf. \cite[Theorem 5.35]{MT}), and so   $(M_\infty', g_\infty'(t); p_\infty')$ is   a splitting  ancient flow  $(\hat N \times \mathbb{R}, \hat h(t)+ ds^2;  p_\infty)$,
       where $\hat {h}(t)$ is an $(n-1)$ dimensional ancient $\kappa$-solution.  As a consequence,  by $(\ref{curvature-less-volume})$, we have
       \begin{align}\label{R-p-flat}
           R_\infty(p_\infty', 0)=0.
       \end{align}

        By the strong maximum principle and (\ref{R-p-flat}),    $(N, \hat {h}(0))$ is flat and so is $(M_\infty', g_\infty'(0))$.  Then by the  injective radius  estimate (cf. \cite[Lemma 3.6]{DZ-TAMS}),  one can show that $(M_\infty', g_\infty'(0))$ must be  isometric to the Euclidean space. In particular,  $\rm{Vol}(B_{g_\infty'(0)}(p_\infty',1))=\omega$.   But this  is impossible by (\ref{maximal-r}).  Hence we finish the proof of (\ref{r-R-comparison}).

        At last,
        by $(\ref{r-d-order})$ and (\ref{r-R-comparison}), we have
      \begin{align}\label{r-decay-upper}\lim _{i \rightarrow \infty} \frac{R(p_i)^{-1}}{d^2(p_i,o)}=0.
     \end{align}
       Then  instead  of the rescaled flow $\left(M, r(p_i)^{-2} g(r(p_i)^{2}t); p_i\right)$  by  $(M,  g_{p_i}(t); p_i)$, the limit  ancient solution $(M_\infty, \bar g(t); p_\infty)$ will split off a line as $(M_\infty', g_\infty'(0); p_\infty')$.
        Thus  $\bar g(t)$  is of  the form  (\ref{r-d-order}).

      In the case of $n=4$, we note that both split $3d$ $\kappa$-noncollapsed ancient flows $h'(t)$ and $\hat h(t)$  in the above arguments are  non-negatively curved when  ${\rm K_m}\ge 0$ away from $K$. Thus both of $h'(t)$ and $\hat h(t)$ are  same as   ancient  $\kappa$-solutions.  Hence  the proofs  above  work  for  the $4d$ steady Ricci solitons when the assumption ${\rm R_m}\ge 0$ is replaced by ${\rm K_m}\ge 0$  away from $K$.

 \end{proof}

According to  the proof in Proposition \ref{Dimension-reduction}, we also get the following curvature comparison.

 \begin{lem}\label{compact-curvature-bound}
     Let $(M^n,g)$ be a complete  noncompact $\kappa$-noncollapsed steady  Ricci soliton  as in Proposition \ref{Dimension-reduction}.  Let $\{p_i\} \to\infty$  be any sequence in   $(M^n,g)$.
         Then for any $q_i\in B_{g_{p_i}}(p_i, D)$,  there is  $C_0(D)>0$ such that
     \begin{align}\label{curvature-control}
         C_0^{-1}R(p_i)\leq R(q_i) \leq C_0 R(p_i).
     \end{align}
 \end{lem}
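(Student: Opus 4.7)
The plan is to argue by contradiction, extract a convergent subsequence of rescaled flows via Proposition \ref{Dimension-reduction}, and then exploit positivity of the scalar curvature on an ancient $\kappa$-solution to rule out both the upper and lower bound failing.

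Recall that the rescaled metric is $g_{p_i}(0)=r_i^{-1}g$ with $r_iR(p_i)=1+o(1)$, so the scalar curvatures transform as $R_{g_{p_i}}(q,0)=r_iR(q)$. Dividing, one has $R(q_i)/R(p_i)=R_{g_{p_i}}(q_i,0)/R_{g_{p_i}}(p_i,0)=(1+o(1))R_{g_{p_i}}(q_i,0)$. Thus the assertion \eqref{curvature-control} is equivalent to the statement that $R_{g_{p_i}}(q_i,0)$ is bounded above and away from $0$ whenever $q_i\in B_{g_{p_i}}(p_i,D)$, uniformly in $i$.

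Suppose this fails. Then, after passing to a subsequence, there exist points $q_i\in B_{g_{p_i}}(p_i,D)$ such that either $R_{g_{p_i}}(q_i,0)\to\infty$ or $R_{g_{p_i}}(q_i,0)\to 0$. By Proposition \ref{Dimension-reduction}, a further subsequence of $(M,g_{p_i}(t);p_i)$ converges in the Cheeger--Gromov sense to a splitting ancient $\kappa$-solution $(N\times\mathbb R,\bar g(t)=h(t)+ds^2;p_\infty)$. Because $B_{g_{p_i}}(p_i,D)$ has diameter $D$ with respect to $g_{p_i}(0)$, the points $q_i$ (after pullback by the approximating diffeomorphisms in the Cheeger--Gromov convergence) subsequently converge to a point $q_\infty$ in the closed ball $\overline{B_{\bar g(0)}}(p_\infty,D)$. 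The smooth convergence of curvatures then gives
\begin{equation*}
R_{g_{p_i}}(q_i,0)\longrightarrow \bar R(q_\infty,0).
\end{equation*}
In particular, $R_{g_{p_i}}(q_i,0)$ cannot blow up, ruling out the first alternative, and in the second alternative we must have $\bar R(q_\infty,0)=0$.

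It remains to rule out $\bar R(q_\infty,0)=0$. The limit $\bar g(t)$ is a non-flat ancient $\kappa$-solution (its scalar curvature at $p_\infty$ equals $1$), so by the strong maximum principle applied to the evolution equation
\begin{equation*}
\partial_t\bar R=\Delta\bar R+2|\mathrm{Ric}|^2
\end{equation*}
for ancient solutions with $\mathrm{Rm}\ge0$, one has $\bar R(\cdot,0)>0$ everywhere on $N\times\mathbb R$. Hence $\bar R(q_\infty,0)>0$, contradicting $\bar R(q_\infty,0)=0$. This completes the proof.

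The main (mild) obstacle is the book-keeping that the points $q_i$, which live in $M$, have well-defined limits in the abstract Cheeger--Gromov limit; this is standard once one fixes the sequence of diffeomorphisms realizing the convergence on balls of radius $>D$. The positivity of $\bar R$ on the limit is the only ingredient requiring the splitting/ancient $\kappa$-solution structure, and it is exactly what Proposition \ref{Dimension-reduction} delivers.
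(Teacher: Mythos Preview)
Your proof is correct and, for the lower bound (the first inequality), essentially identical to the paper's: both argue by contradiction, pass to the splitting limit, observe that $R_{\bar g(0)}(q_\infty)=0$ forces the limit to be flat by the strong maximum principle, and contradict $R_{\bar g(0)}(p_\infty)=1$.

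For the upper bound there is a small methodological difference. The paper does not argue by contradiction; instead it directly invokes the intermediate estimates \eqref{R-bound-volume} and \eqref{r-R-comparison} established \emph{inside} the proof of Proposition~\ref{Dimension-reduction}, which already give $R\le C_0^2 r(p_i)^{-2}\le C_0' R(p_i)$ on $B_{g_{p_i}}(p_i,D)$. You instead take the conclusion of Proposition~\ref{Dimension-reduction} as a black box and deduce the bound from smooth Cheeger--Gromov convergence (bounded limit curvature forbids blow-up). Both are valid: the paper's route is more quantitative and makes explicit that the upper bound is really part of the machinery that produces the convergence, while yours is cleaner once that convergence is granted. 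There is no circularity in your version because Proposition~\ref{Dimension-reduction} is available as a hypothesis.
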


 \begin{proof}We note that the  rescaled   flow  $(M, g_{p_i}(t); p_i)$  converges  to a  splitting   of ancient solution
  $(M_\infty,  \bar g(t)= h(t)+ds^2;  p_\infty)$. Then   by (\ref{r-R-comparison}) and (\ref{R-bound-volume}),  we get
      the second inequality  of  (\ref{curvature-control}) immediately.  Thus  we  only need to prove the first inequality.

     By contradiction,   there exists a sequence of points $q_i\in B_{g_{p_i}}(p_i, D)$ for some $D>0$ such that
     \begin{align}\label{curvature-flat}
         \frac{R(q_i)}{R(p_i)}\rightarrow 0,~{\rm as}~i\to\infty.
     \end{align}
     Then
     $$R_{\bar g(0)}(q_\infty)=0, $$
     where $q_\infty$ is a limit of $\{q_i\}$ from  the convergence of $(M, g_{p_i}(t); p_i)$.
       By the strong maximum principle,  it follows that $\bar g(0)$ is a flat metric,  which contradicts to the fact  $R_{\bar g(0)}(p_\infty)=1$.
    Thus  (\ref{curvature-control}) holds.
 \end{proof}

 \section{ Compact case of $(N, h(t))$}

 In this section,  we assume that
  there exists a     sequence   of   $p_i\rightarrow \infty$ on   an $n$-dimensional   steady  Ricci soliton   $(M^n, g)$  such that     the corresponding  split   ancient $\kappa$-solution  $(N, h(t))$  of  $(n-1)$-dimension  in Proposition \ref{Dimension-reduction}  satisfies
  \begin{align}\label{bound-h} {\rm{Diam}}(N, h(0))\leq C.
    \end{align}
 We will  use the method in \cite{DZ-JEMS,DZ-SCM} to study level set  geometry of $(M^n, g)$  under the condition  (\ref{bound-h}).   All  results    in this section holds for any dimension.

Firstly  we  show that    $(M,g)$ has  a  convexity property  in sense of the  geodesics.

 \begin{lem}\label{geodesic-away-cpt-set}
     Suppose that  there exists a     sequence   of   $p_i\rightarrow \infty$ such that    the split $(n-1)$-dimensional  ancient  $\kappa$-solution $(N,  {h}(t))$  in  Proposition \ref{Dimension-reduction} satisfies
(\ref{bound-h}).
    Then there exists a compact set $K'$ $(K\subset K'$)  such that for $x_1, x_2\in M\setminus K'$  the minimal  geodesic curve  $\sigma(s)$ connecting $x_1$ and $ x_2$, is contained in    $\subset M\setminus K$,
     where $K$ is the compact set in Proposition  \ref{Dimension-reduction}.
 \end{lem}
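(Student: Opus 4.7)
The plan is to argue by contradiction. If the conclusion fails, one can extract sequences $x_{1,k}, x_{2,k}$ escaping every compact set and minimal geodesic segments $\sigma_k$ joining them that intersect $K$ at some point $y_k$. Passing to a subsequence, $y_k \to y_\infty \in K$, and the unit tangent vectors of the two halves of $\sigma_k$ at $y_k$ converge to antipodal vectors $\pm v \in T_{y_\infty}M$. By an Arzel\`a--Ascoli argument applied to these unit-speed segments, I obtain a minimizing geodesic line $\gamma: \mathbb{R} \to M$ with $\gamma(0) = y_\infty$ and $\dot\gamma(0) = v$, in the sense that every finite subarc $\gamma|_{[-s,s]}$ realizes the distance between its endpoints. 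Since $x_{1,k}, x_{2,k} \to \infty$, both ends of $\gamma$ exit every compact set, and since $K$ is compact, there exists $T>0$ with $\gamma(s) \in M \setminus K$ for all $|s| \ge T$.

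I would next combine the hypothesis with Proposition~\ref{Dimension-reduction} and Lemma~\ref{compact-curvature-bound} to analyze the geometry along $\gamma$ near infinity. Applying the splitting theorem to $q_i = \gamma(s_i)$ for $s_i \to +\infty$, the rescaled flows $g_{q_i}(t)$ have a subsequential split limit $(N_\gamma \times \mathbb{R},\, \bar h_\gamma(t) + ds^2)$. I would argue that $N_\gamma$ must be compact with uniformly bounded diameter: comparing metric balls at $p_i$ (where the split cross-section is compact of diameter $\le C$ by hypothesis) with balls at $q_i$ by means of Lemma~\ref{compact-curvature-bound} and the one-parameter group $\phi_t$ generated by $-\nabla f$, the compact-cross-section structure is transported along $M\setminus K$. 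This is the step I expect to be the main technical obstacle, since the hypothesis guarantees compactness of the split only for the specific sequence $\{p_i\}$, while here we need an analogous control along $\{\gamma(s_i)\}$.

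With a compact split cross-section at each end of $\gamma$ in hand, I would exploit the soliton equation. By (\ref{Def-soliton}) with $\rho=0$ and $\mathrm{Ric}>0$ on $M\setminus K$, the potential $f$ is strictly convex along geodesic arcs in $M\setminus K$, so $L(s):=f(\gamma(s))$ is strictly convex on $(-\infty,-T]\cup[T,+\infty)$. Combined with $|\nabla f|^2=1-R\to 1$ at infinity from (\ref{scalar-equ}) and the observation that in a cylindrical end $N_\gamma\times\mathbb{R}$ with $N_\gamma$ compact the only minimizing rays to infinity are axis-parallel, one obtains that $\dot\gamma(s)$ is asymptotically aligned with $\pm\nabla f/|\nabla f|$. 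Hence $L'(s)\to +1$ as $s\to+\infty$ and $L'(s)\to -1$ as $s\to-\infty$, so that $L(s)\sim |s|+O(1)$ at both ends.

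Finally I would derive a contradiction via a path-length comparison. For large $s$, both $\gamma(s)$ and $\gamma(-s)$ lie in the cylindrical region at axis-coordinates differing by $|L(s)-L(-s)|=O(1)$. I can then construct a piecewise path from $\gamma(-s)$ to $\gamma(s)$ by moving across the compact cross-section and then along the axis, whose total length is uniformly bounded by $\mathrm{diam}(N_\gamma)+O(1)$. This contradicts $d(\gamma(-s),\gamma(s))=2s\to\infty$, which holds by the minimality of $\gamma$.
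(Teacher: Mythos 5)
Your proposal takes a fundamentally different and, unfortunately, flawed route. The paper's proof is far more direct: it picks a single index $i$ with $R(p_i)^{-1/2}\ll d(p_i,o)$, observes that because the rescaled flow near $p_i$ is Gromov--Hausdorff close to a cylinder $N\times\mathbb{R}$ with $N$ compact of diameter $\le C$, the metric ball $B_g(p_i,10CR(p_i)^{-1/2})$ separates $M$ into a compact piece $\Sigma_p\supset K$ and a noncompact piece $M'$. Setting $K'=\Sigma_p\cup B_g(p,10CR(p)^{-1/2})$, any minimal geodesic between points of $M\setminus K'$ that enters $K$ must cross the tiny separating ball twice. A triangle inequality then forces $d(q_1,q_2)\gtrsim d(o,p)$ for the entry/exit points, yet $d(q_1,q_2)\le 20CR(p)^{-1/2}\lesssim \delta\, d(o,p)$ by (\ref{r-decay-upper}), a contradiction. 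The whole argument uses only the compactness of the cross-section at one $p_i$ and the scale estimate (\ref{r-decay-upper}).

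The gap you flag at step 4 is genuine and fatal to your route. You need compact split cross-sections, with a \emph{uniform} diameter bound, along the arbitrary sequence $\gamma(s_i)$; but the hypothesis (\ref{bound-h}) gives this only for the fixed sequence $\{p_i\}$. The tools that would transport this control to other sequences are Lemma~\ref{compact-R-decay}, Lemma~\ref{compact-ancient-solution}, Proposition~\ref{compact-levelset-compact}, and ultimately Theorem~\ref{main-theorem}, all of which are proved \emph{after} and \emph{using} this lemma, so your argument would be circular. There is no obvious shortcut: a priori it is entirely possible that split cross-sections along $\gamma$ are noncompact even while those along $\{p_i\}$ are compact, and ruling this out is essentially the content of the rest of the paper.

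Two further issues even if step 4 were granted. First, your steps 5--7 invoke strict convexity of $f$ and hence $\mathrm{Ric}>0$ off $K$, but the lemma as stated does not assume Ricci positivity (it is only introduced later, in Lemma~\ref{compact-R-decay}); the paper's proof needs no curvature sign at all beyond what enters (\ref{r-decay-upper}). Second, the final ``short transverse path'' contradiction is stated vaguely: you would need a quantitative statement that the line $\gamma$ eventually travels in the axis direction of a \emph{fixed} cylinder and that the two ends live in the same cylindrical end at axis-coordinates differing by $O(1)$, neither of which has been justified. I would recommend abandoning the line-splitting strategy and adopting the separating-ball argument.
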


 \begin{proof} By the convergence  of  $(M,g_{p_i}(t); p_i)$ together with   (\ref{bound-h}),  it is easy to see that one can choose a point $p\in  \{p_i\}$ such that $B_g(p_i,10C R(p_i)^{-\frac{1}{2}})$ divides $M$ into three parts  with a compact part $\Sigma_{p}$  containing  $K$ as follows,
 \begin{align}\label{m-decomp}M=B_g(p_i,10C R(p_i)^{-\frac{1}{2}}) \cup \Sigma_{p}\cup M',
 \end{align}
 where $B_g(p_i,10C R(p_i)^{-\frac{1}{2}})\cap K=\emptyset$ and  $M'=M\setminus (B_g(p_i,10C R(p_i)^{-\frac{1}{2}}\cup \Sigma_{p})$  is a noncompact set of $M$.
 Set
 $$K'=\Sigma_{p}\cup B_g(p,10C R(p)^{-\frac{1}{2}}).$$
 We need to  verify  that $K'$ is  as required in the lemma.

 On the contrary,  there will exist  two points $x_1, x_2\in M\setminus K'$  and   another point  $x\in \sigma(s)\cap K$, where  $\sigma(s)$ is the
minimal  geodesic  curve connecting  $x_1$ and  $x_2$.  Then   $\sigma(s)$ will pass through $B_g(p,10C R(p)^{-\frac{1}{2}})$ at least twice.  Denote  the first point by $q_1$   and  the last point by $q_2$  in $B_g(p,10C R(p)^{-\frac{1}{2}})$ respectively,    which intersects   with $\sigma(s)$.   Let  $\sigma' $ be the part of  $\sigma(s)$ between $q_1$ and $q_2$. Thus by the triangle inequality,  we have
     \begin{align}\label{sigma-length}
         d_g(q_1,q_2)&=\rm{Length}(\sigma')=d_g(q_1,x)+d_g(x,q_2)\notag\\
         &\geq d_g(q_1,o)+d_g(o,q_2)-2d_g(o,x)\notag\\
         &\geq 2d_g(o,p)-d_g(q_1,p)-d_g(q_2,p)-2C'\notag\\
         &\geq 2d_g(o,p)-20 R(p)^{-\frac{1}{2}}-2C'.
        \end{align}

 On the other hand,
     by the estimate  $(\ref{r-decay-upper})$,   we see that for any small $\delta$ it holds
     \begin{align}\label{p-ASCR}
         R(p_i)^{-\frac{1}{2}}\leq \delta d_g(p_i,o),
     \end{align}
     as long as $i>>1.$
     By  (\ref{sigma-length}), it  follows
  $$d_g(q_1,q_2)\ge 2d_g(o,p)-20 \delta d_g(o,p)-2C'\geq d_g(o,p).
   $$
    However,
   \begin{align}         d_g(q_1,q_2)&\leq d_g(q_1,p)+d_g(p,q_2)\leq 20 R(p)^{-\frac{1}{2}}\notag\\
         &\leq 20 \delta d_g(o,p)\leq \frac{1}{2} d_g(o,p).\notag
     \end{align}
     Thus we get  a contradiction! The lemma is proved.
 \end{proof}

\subsection{Curvature decay estimate}

By Lemma \ref{compact-curvature-bound}
 and  Lemma \ref{geodesic-away-cpt-set}, we prove

 \begin{lem}\label{compact-R-decay}
     Let $(M^n, g)$ be  the steady  Ricci soliton   in  Proposition \ref{Dimension-reduction} with $\rm{Ric}>0$ away from $K$.
       Suppose that  there exists a     sequence   of   $p_i\rightarrow \infty$ such that    the split $(n-1)$-dimensional  ancient  $\kappa$-solution $(N,  {h}(t))$  in  Proposition \ref{Dimension-reduction} satisfies
(\ref{bound-h}).     Then the  curvature of $(M^n,g)$   decays to zero uniformly.   Namely,
    \begin{align}\label{R-decay} \lim_{x\rightarrow \infty}R(x)=0.
    \end{align}
 \end{lem}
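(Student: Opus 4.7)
My plan is to argue by contradiction. Suppose the conclusion fails: then there exist $\delta>0$ and a sequence $q_i\to\infty$ with $R(q_i)\geq\delta$. I would first apply Proposition \ref{Dimension-reduction} to $\{q_i\}$ in place of $\{p_i\}$: after passing to a subsequence, the rescaled flows $(M,g_{q_i}(t);q_i)$ converge in the Cheeger--Gromov sense to a splitting limit $(N'\times\mathbb{R},h'(t)+ds^2;q_\infty)$. Since $r_iR(q_i)=1+o(1)$ and $R(q_i)\geq\delta$, the rescaling factors $r_i$ are uniformly bounded from above by $2/\delta$, so in the original metric $g$ a ball $B_g(q_i,1)$ is uniformly $\kappa$-noncollapsed; combined with Lemma \ref{compact-curvature-bound}, the scalar curvature on $B_g(q_i,1)$ is uniformly comparable to $R(q_i)\geq\delta$.

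Next I would translate the hypothesis at $\{p_i\}$ into metric information on $M$ itself: by the compact splitting limit $(N,h(0))$ of diameter $\leq C$, together with Lemma \ref{compact-curvature-bound}, the ball $B_g(p_i,DR(p_i)^{-1/2})$ has curvature comparable to $R(p_i)$ and, for $D$ large, contains a full cross-section of the limiting cylinder $N\times\mathbb{R}$. Consequently the level set $\Sigma_{p_i}:=\{f=f(p_i)\}$ through $p_i$ has intrinsic $g$-diameter $\lesssim R(p_i)^{-1/2}$. Combining this with the monotonicity (\ref{harnack-esti})---equivalently $\langle\nabla R,\nabla f\rangle=-2\operatorname{Ric}(\nabla f,\nabla f)\leq 0$ on $M\setminus K$---gives that $R$ is non-increasing along the integral curves of $\nabla f$ in the direction of increasing $f$, while $|\nabla f|^2=1-R$ ensures $\nabla f$ is transverse to $\Sigma_{p_i}$.

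Finally, I would close the argument by observing that for $j$ sufficiently large the cross-section through $p_j$ separates $M$ into two pieces, one containing the fixed $K$ and the other one going to infinity. Applying Lemma \ref{geodesic-away-cpt-set} to the enlarged compact set $K'$, the integral curve of $\nabla f$ through $q_i$ (with $i$ large) must eventually cross this cross-section at some point $x$ with $d_g(x,p_j)\lesssim R(p_j)^{-1/2}$. On one hand Lemma \ref{compact-curvature-bound} yields $R(x)\leq C_0R(p_j)$; on the other, monotonicity along $\nabla f$ (noting that moving from $q_i$ to $x$ is in the direction of decreasing $f$, so $R$ is non-decreasing) gives $R(x)\geq R(q_i)\geq\delta$. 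Hence $R(p_j)\geq\delta/C_0$ for all large $j$. But a uniform lower bound on $R(p_j)$ combined with the compact cross-section of diameter $\sim R(p_j)^{-1/2}$ forces the slab $B_g(p_j,DR(p_j)^{-1/2})$ to have volume bounded below by a uniform positive constant, contradicting $\operatorname{AVR}(g)=0$ (proved inside Proposition \ref{Dimension-reduction}).

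The main obstacle is to justify rigorously the claim that ``the cross-section through $p_j$ separates $M$'' and that the $\nabla f$-orbit through $q_i$ is forced to cross it. This relies on combining the geodesic-convexity of $M\setminus K'$ (Lemma \ref{geodesic-away-cpt-set}), the Cheeger--Gromov convergence of $(M,g_{p_j}(t);p_j)$ to the compact cylinder $N\times\mathbb{R}$, and the transversality $\langle\nabla f,\nu\rangle\neq 0$ on the cross-section (from $R<1$), so that the foliated tube picture at $p_j$ really realizes the topological ``end'' of $M$ through which any orbit escaping to infinity must pass.
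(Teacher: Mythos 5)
Your proposal is structurally close to the paper's in spirit (it uses the monotonicity $\langle\nabla R,\nabla f\rangle=-2\operatorname{Ric}(\nabla f,\nabla f)\leq 0$ along integral curves of $\nabla f$, the bounded-diameter cross-section at $p_j$ from Lemma \ref{compact-curvature-bound}, and the convexity of $M\setminus K'$ from Lemma \ref{geodesic-away-cpt-set}), and the intermediate implication ``$R(q_i)\geq\delta\Rightarrow R(p_j)\geq\delta/C_0$ for $j$ large'' is indeed what the monotonicity gives, once one also verifies (as the paper does via \cite[Lemma 2.2, 2.3]{CDM} and Claim 1) that the $-\nabla f$ orbit from $q_i$ genuinely reaches the level $f^{-1}(f(p_j))$ rather than stalling at an equilibrium.

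However, the final step of your argument --- deducing a contradiction with $\operatorname{AVR}(g)=0$ from $R(p_j)\geq\delta/C_0$ --- does not work. If $R(p_j)$ is bounded below uniformly, the cross-sections $f^{-1}(f(p_j))$ have diameter $\lesssim R(p_j)^{-1/2}\lesssim (\delta/C_0)^{-1/2}$, i.e.\ \emph{uniformly bounded}, so the end of $M$ looks like a tube with bounded cross-section. The volume of $B(p,r)$ then grows at most linearly in $r$ (one bounded cross-sectional slice per unit of arclength along $\nabla f$), which gives $\operatorname{AVR}(g)=0$ rather than contradicting it. So there is no violation of the asymptotic volume estimate from this picture, and your contradiction does not close.

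What is actually needed (and what the paper supplies as the first step of its proof) is a different mechanism to rule out $R(p_i)\geq c>0$: one considers the normalized potentials $f_{p_i}=f-f(p_i)$ on $(M,g_{p_i}(0);p_i)$; since $|\nabla f_{p_i}|_{g_{p_i}}\leq c^{-1/2}$ is uniformly bounded, these converge to a smooth $f_\infty$ on $N\times\mathbb{R}$ solving $\operatorname{Ric}(\bar g(0))=\nabla^2 f_\infty$. Hence $(N,h(0))$ is itself a \emph{compact} steady gradient Ricci soliton, which by the maximum principle must be Ricci flat --- contradicting $R_h(p_\infty,0)=1$. This gives $R(p_i)\to 0$, after which one propagates the decay to all of $M$ exactly by the flow-line argument you sketch (Claims 1 and 2 of the paper). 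Your proposal has the right propagation step but is missing the soliton-limit argument that makes $R(p_i)\to 0$ (and hence $R(p_j)\geq\delta/C_0$ impossible) in the first place.
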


 \begin{proof}
     First we  prove
     \begin{align}\label{pi-decay}
         \lim_{p_i\rightarrow \infty}R(p_i)=0.
     \end{align}
     On the contrary,    we assume that $R(p_i)\geq c$ for some constant $c>0$.
      We consider a sequence  of functions    $f_{p_i}=f-f(p_i)$ on the Riemannian  manifolds  $(M, g_{p_i}(0); p_i)$.   By $(\ref{scalar-equ})$, it is easy to see
     \begin{align*}
         |\nabla f_{p_i}|_{g_{p_i}}\leq  c^{-\frac{1}{2}}.
     \end{align*}
     Thus for any $D>0$ it holds
          \begin{align*}
         |f_{p_i}(x)|\leq 2c^{-\frac{1}{2}}D, ~\forall x\in B_{g_{p_i}(p_i,D)}.
  \end{align*}
    By  the regularity of the  Laplace equation,
    $$\Delta_{g_{p_i}} f_{p_i} = R(g_{p_i(0)}),$$
   $f_i$ converges subsequently   to a smooth  function $f_{\infty}$ on $N\times \mathbb{R}$ which
    satisfies the gradient  steady  Ricci soliton equation,
     $$ {\rm{Ric}}(\bar g(0)) =\nabla^2 f_{\infty}.$$
    Note that  $\bar g(0)= h(0)+ds^2$ is a   product  metric. Hence    $(N, h(0))$ is  also a steady gradient Ricci soliton,

On the other hand,  by the maximum principle,  $(N, h(0); p_\infty)$  should be Ricci-flat.   However,  by the normalization of
      $$R(g_{p_i}(0))  ( p_i)=1,$$
     $R( h(0))( p_\infty)$ is also $1$. This is  a contradiction!  (\ref{pi-decay}) is proved.

   By (\ref{pi-decay}) and Lemma \ref{compact-curvature-bound},  we get
     \begin{align}\label{geodesic-ball-decay}
         \lim_{i\to\infty}\sup_{B_{g}(p_{i},10CR(p_{i})^{-\frac{1}{2}}) } R(x)=0.
     \end{align}
   Next we use  (\ref{geodesic-ball-decay}) to derive (\ref{R-decay}).

    Recall  that the set of   equilibrium points of $(M, g, f)$ is  given  by
    \begin{align*}
       S:= \{x\big| |~\nabla f|(x)=0\}.
    \end{align*}
  In general,   $S$ may be non-empty.  But we have

  $\mathbf {Claim 1}$:
       There is no   equilibrium points  away from a compact set  $\hat K$ of $M$ which  contains   $K'$.  Here $K'$ is the set of $M$ determined in   Lemma  \ref{geodesic-away-cpt-set}.

       If $S$ is not empty and $\mathbf {Claim 1}$ is not true,   there will be    two  equilibrium points  $x_1$ and $x_2$ and   a compact set  $\hat K$   containing  $K'$  such that  $x_1, x_2\in M\setminus \hat K$.  Then by  Lemma  \ref{geodesic-away-cpt-set},  there is  a  minimal geodesic  curve $\sigma(s)$  connecting  $x_1$ and $x_2$ such that $\sigma(0)=x_1 $ and $\sigma(T)=x_2 $ and  $\sigma(s)\subset  M\setminus K$.
   Note
     \begin{align*}
         \frac{d}{d s}(\langle \nabla f, \sigma')\rangle(\sigma(s)))=\nabla^2 f(\sigma',\sigma')(s)=\rm{Ric}(\sigma',\sigma').
     \end{align*}
 Thus   we get
     \begin{align*}
         0&= \langle \nabla f, \sigma')\rangle(\sigma(t))-\langle \nabla f, \sigma')\rangle(\sigma(0))\\
         &=\int_0^T \rm{Ric}(\sigma',\sigma')  ds>0,
     \end{align*}
     which is a contradiction! Hence,  $\mathbf {Claim 1}$ is true.

By (\ref{r-decay-upper}), we can choose a subsequence of $\{p_i\}$, still denoted by  $\{p_i\}$ such that
\begin{align}\label{non-intersection}B_{g}(p_i,10CR(p_i)^{-\frac{1}{2}})\cap B_{g}(p_{j},10CR(p_{i+1})^{-\frac{1}{2}})=\emptyset,~\forall ~i,j>>1.
\end{align}
Then as in (\ref{m-decomp}), there are a compact set $\bar K$  and  a sequence of compact set $\{K_i\}$  $(i\ge i_0)$ of $M$ such that $\hat K\subset \bar K$ and
$$\partial K_i\subset \partial B_g(p_i,10C R(p_i)^{-\frac{1}{2}}) \cup \partial B_g(p_{i+1},10C R(p_{i+1})^{-\frac{1}{2}}), $$
and  $M$ is decomposed as
 \begin{align}\label{m-decomp-2}M=\bar K \cup_{i\ge i_0} (K_i
 \cup (B_g(p_{i+1},10C R(p_{i+1})^{-\frac{1}{2}})),
 \end{align}

  $\mathbf {Claim 2}$:
     For any $q_i\in K_i$, there exists  $t_i>0$ such that
     \begin{align}\label{qi-} q_i^{t_i}=\phi_{t_i}(q_i)\in B_{g}(p_i,10R(p_i)^{-\frac{1}{2}}C)\cup B_{g}(p_{i+1},10R(p_{i+1})^{-\frac{1}{2}}C).
     \end{align}

     On the contrary, we see that $\phi_t(q_i)\subset K_i$ for all $t\ge 0$.
       Since $K_i$ is compact, there exists  $c'>0$ by   $\mathbf {Claim 1}$  such that
     $$\rm{Ric}\geq c' g,  c'^{-1}\leq |\nabla f|\leq c'.$$
      It follows
     \begin{align}\label{sequence-increasing}
         \frac{d}{d t} R(\phi_t(q_i))&=-\langle \nabla R, \nabla f\rangle(\phi_t(q_i))= 2\rm{Ric}(\nabla f,\nabla f)\notag\\
         &\geq 2c'^{-1}>0, ~\forall ~t\ge 0.
     \end{align}
     As a consequence,
     $$ R(\phi_t(q_i))\ge 2c'^{-1}t\to\infty,~ {\rm as}~t\to\infty.$$
       This  is impossible since $R(\cdot)$ is uniformly bounded.  Hence, $\mathbf {Claim 2}$ is true.

      By $\mathbf {Claim 2}$ and (\ref{sequence-increasing}),   for any $q_i\in K_i$ we see
   \begin{align}  &R(q_i)\le  R(q_i^{t_i})\notag\\
   &\le \max \{R(x)|~ x\in  B_{g}(p_i,10R(p_i)^{-\frac{1}{2}}C)\cup B_{g}(p_{i+1},10R(p_{i+1})^{-\frac{1}{2}}C)\}.\notag
    \end{align}
  Thus we get (\ref{R-decay}) from  (\ref{geodesic-ball-decay}) and (\ref{m-decomp-2}) immediately.

 \end{proof}

 \begin{rem}\label{f-linear-growth}
 The curvature of steady  Ricci soliton in Lemma \ref{compact-R-decay} decays  uniformly to zero. Then   $|\nabla f(x)|\to 1$ as $\rho(x )\to\infty$  by  (\ref{scalar-equ}).  Moreover,  by   \cite[Lemma 2.2]{DZ-SCM} (or   \cite[Theorem 2.1]{CDM}),   $f$ satisfies (\ref{linear-f}).
      Hence,  the integral curve $\gamma(s)$   generated by $\nabla f$   extends  to the  infinity as $s\rightarrow \infty$.
 \end{rem}

\subsection{Estimate of level sets}

By  Lemma \ref{compact-R-decay} and (\ref{scalar-equ}), there exists a point $p_0\in M $ such that
$$R_{max}=\sup_{p\in M} R(p)=R(p_0)=1.$$
For any positive $c<1$, we set
 $$S(c)=\{p\in M|~R(p)\geq R_{max}-c\}.$$
 Then $S(c)$ is a  compact set.  Moreover, by Remark \ref{f-linear-growth}  there exists a constant  $c_0$ such that $\hat K\subset S(c_0)$ and $\nabla f\neq 0$ on  $ S(c_0)\setminus \hat K$. Thus  VF $\hat X=\frac{\nabla f}{|\nabla f|}$ is well-defined on $S(c)\setminus \hat K$ for any $c\ge c_0$.

  By \cite[Lemma 2.2, 2.3]{CDM},  it is known that there exists a $t_q$ such that $\phi_{t_q}(q)\in S(c_0)$    for any $q\in M\setminus S(c_0)$. Consequently, for any   integral curve of $\hat X=\frac{\nabla f}{|\nabla f|}$, $\Gamma(s) :[0,\infty)\rightarrow M$,  we can reparametrize $s$ such that $\Gamma(0)=p\in S(c_0)\setminus \hat K$,  and   so  $\Gamma(s)  \subset M\setminus \hat K$  is a smooth curve for any $s>0$.

  \begin{lem}\label{split-line}
       Let $(M^n, g)$ be an $n$-dimensional steady  soliton as in Lemma \ref{compact-R-decay} and let  $\Gamma(s)$ be  any   integral curve of $\hat X$ with $\Gamma(0)=p\in S(c_0)\setminus \hat K$.  Then     for any $\epsilon$, there exists a uniform constant $C=C(\epsilon)>0$
        such that
       \begin{align}\label{potential-distance}
           (1-\epsilon)(s_2-s_1)\leq d(\Gamma(s_2),\Gamma(s_1))\leq (s_2-s_1), ~\forall  s_2>s_1>C.
       \end{align}
       In particular,
 \begin{align}\label{potential-distance-special}        (1-\epsilon)s\leq d(\Gamma(s),p)\leq s, ~\forall ~s>C.
   \end{align}

   \end{lem}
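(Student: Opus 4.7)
The upper bound in (\ref{potential-distance}) is immediate: since $|\hat X|\equiv 1$ wherever defined, the integral curve $\Gamma$ is parametrized by arclength, so $\Gamma([s_1,s_2])$ has length exactly $s_2-s_1$, hence $d(\Gamma(s_1),\Gamma(s_2))\le s_2-s_1$.

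For the lower bound the plan is to exploit the $1$-Lipschitz property of $f$. From (\ref{scalar-equ}) together with $R\ge 0$ one obtains $|\nabla f|\le 1$ on all of $M$, so for any $x,y\in M$,
\[
|f(x)-f(y)|\le d(x,y).
\]
Along $\Gamma$ one computes $\frac{d}{ds}(f\circ\Gamma)=\langle \nabla f,\hat X\rangle=|\nabla f|(\Gamma(s))$, and therefore
\[
d(\Gamma(s_1),\Gamma(s_2))\;\ge\; f(\Gamma(s_2))-f(\Gamma(s_1))\;=\;\int_{s_1}^{s_2}|\nabla f|(\Gamma(s))\,ds.
\]

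It therefore suffices to show that $|\nabla f|(\Gamma(s))\to 1$ as $s\to\infty$. I would combine two already-established inputs: Lemma \ref{compact-R-decay}, which gives the uniform decay $R(x)\to 0$ as $x\to\infty$, and the fact that $\Gamma(s)\to\infty$. The latter follows from Remark \ref{f-linear-growth}: $f$ is proper of linear growth and $f\circ\Gamma$ is strictly increasing (with derivative $|\nabla f|>0$ on $M\setminus\hat K$ by Claim 1), so $\Gamma(s)$ must leave every compact set. Combining these, $R(\Gamma(s))\to 0$, whence $|\nabla f|(\Gamma(s))=\sqrt{1-R(\Gamma(s))}\to 1$. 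So for any $\epsilon>0$ there is $C=C(\epsilon)$ with $|\nabla f|(\Gamma(s))\ge 1-\epsilon$ for all $s>C$, and substituting into the integral yields (\ref{potential-distance}). The special case (\ref{potential-distance-special}) follows by running the same argument from $s_1=0$, splitting $\int_0^s=\int_0^C+\int_C^s$, discarding the first piece, and absorbing the additive loss $(1-\epsilon)C$ by replacing $\epsilon$ with $\epsilon/2$ and enlarging $C$ so that $(1-\epsilon/2)(s-C)\ge(1-\epsilon)s$ for $s>C$.

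The only delicate point is arguing that $C$ may be chosen depending only on $\epsilon$ and not on the specific curve $\Gamma$ (equivalently, on the starting point $p\in S(c_0)\setminus\hat K$). This reduces to a uniform upper bound on the time $\Gamma$ spends in a fixed compact set $K_\epsilon\supset\hat K$ (outside of which $R<\epsilon$) before escaping, which in turn follows from a uniform positive lower bound for $|\nabla f|$ on $K_\epsilon\setminus\hat K$ (via continuity and Claim 1) together with the compactness of $S(c_0)$ and the properness of $f$. I do not expect any serious obstacle beyond this bookkeeping; the lemma is essentially a direct consequence of the $1$-Lipschitz property of $f$ and the uniform curvature decay already proved in Lemma \ref{compact-R-decay}.
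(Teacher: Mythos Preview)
Your argument is correct and follows the same overall skeleton as the paper: the upper bound from arclength parametrization, the lower bound from $d(\Gamma(s_1),\Gamma(s_2))\ge f(\Gamma(s_2))-f(\Gamma(s_1))=\int_{s_1}^{s_2}|\nabla f|\,ds$, and the uniform choice of $C$ via a uniform lower bound for $|\nabla f|$ along $\Gamma$ together with linear growth of $f\circ\Gamma$.

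The one genuine difference is how you obtain the inequality $f(\Gamma(s_2))-f(\Gamma(s_1))\le d(\Gamma(s_1),\Gamma(s_2))$. The paper takes a minimal geodesic $\gamma$ from $\Gamma(s_1)$ to $\Gamma(s_2)$ and uses $\nabla^2 f=\mathrm{Ric}\ge 0$ along $\gamma$ to show $\langle\nabla f,\gamma'\rangle$ is nondecreasing, then bounds the endpoint value by $|\nabla f|\le 1$. You instead invoke the global $1$-Lipschitz property of $f$ directly from $|\nabla f|^2=1-R\le 1$. Your route is more elementary and, in this setting, slightly more robust: the paper's convexity step tacitly requires $\gamma$ to lie in the region where $\mathrm{Ric}\ge 0$ (i.e.\ in $M\setminus K$), which is true by Lemma~\ref{geodesic-away-cpt-set} once $s_1$ is large but is not made explicit there. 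Your argument needs no such geometric control on the geodesic. The paper's uniformity argument is essentially the same as yours: it uses monotonicity of $R$ (hence of $|\nabla f|$) along $\Gamma$ to get a uniform lower bound $|\nabla f|\ge c$ starting from $p\in S(c_0)\setminus\hat K$, and then $f(\Gamma(s))\ge f(p)+cs$ forces $\Gamma(s)$ into the region $\{|\nabla f|>1-\epsilon\}$ after a uniform time.
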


   \begin{proof}
    Firstly  by  Remark \ref{f-linear-growth}, we note that for any $\epsilon>0$ there exists a compact set $S'$ such that
    \begin{align}\label{gradient-f-almost-1}
        |\nabla f|(x)>1-\epsilon,~\forall x\in M\setminus S'.
    \end{align}
Moreover,  $(\ref{gradient-f-almost-1})$ holds whenever  $f(x)>L$.
Since  $\Gamma(s)\subset  M\setminus \hat K$,    $|\nabla f|(\Gamma(s))\geq c_0>0$  by $(\ref{R-monotonicity})$ for all $s\geq 0$.  It follows
       \begin{align*}
         f(\Gamma(s))-f(\Gamma(0))=\int_0^s \frac{d}{dt}f(\Gamma(t)) dt=\int_0^s |\nabla f|(\Gamma(t)) dt\geq cs.
     \end{align*}
     Thus there exists a uniform constant $C=\frac{L}{c}+1$ such that $(\ref{gradient-f-almost-1})$ holds as long as $s>C$.

        Let $\gamma:[0, D] \rightarrow M$ be a minimal  geodesic from $\Gamma(s_1)$ to $\Gamma(s_2)$, where $D=d(\Gamma(s_1), \Gamma(s_2))$.  Then by
      $$\frac{d}{d r}\langle\nabla f, \gamma^{\prime}(r)\rangle=\nabla^2 f(\gamma^{\prime}(r), \gamma^{\prime}(r)) \geq 0,$$
        we obtain
     $$
     f(\Gamma(s_2))-f(\Gamma(s_1))=\int_0^D\langle\nabla f, \gamma^{\prime}(r)\rangle d r \leq D\langle\nabla f, \gamma^{\prime}(D)\rangle.
     $$
     This implies
     \begin{align}\label{potential-less-geodesic}
         f(\Gamma(s_2))-f(\Gamma(s_1)) \leq d(\Gamma(s_1), \Gamma(s_2)).
     \end{align}
On the other hand, by $(\ref{gradient-f-almost-1})$,
     we have
     \begin{align}\label{potential-large-geodesic}
         f(\Gamma(s_2))-f(\Gamma(s_1))&=\int_{s_1}^{s_2}\langle\nabla f, \Gamma^{\prime}(r)\rangle d r\notag\\
         &=\int_{s_1}^{s_2}|\nabla f|(\Gamma(r)) d r \geq(1-\epsilon)(s_2-s_1).
     \end{align}
     Thus the first inequality in $(\ref{potential-distance})$ follows from $(\ref{potential-less-geodesic})$ and $(\ref{potential-large-geodesic})$ immediately.
      Note that
     \begin{align}\label{geodesic-dominate}
         d(\Gamma(s_1),\Gamma(s_2))\leq {\rm{Length}}(\Gamma(s))|_{s_1}^{s_2}= s_2-s_1.
     \end{align}
    Hence,   the second inequality in $(\ref{potential-distance})$ also holds.  $(\ref{potential-distance-special})$ is a direct consequence of $(\ref{potential-distance})$ by the triangle inequality.

   \end{proof}

As in Lemma \ref{split-line},   let $\Gamma_i(s)$ be  an  integral curve of $\hat X$ through $p_i$  with  $\Gamma_i(0)\in S(c_0)$ and $\Gamma_i(s_i)=p_i$. For any $D>0$,  we  set
 $$\hat \Gamma_i(s)=\Gamma_i(R(p_i)^{-\frac{1}{2}} s+s_i), ~s\in [-D, D].$$
 Then it is easy to see
 $$|\frac{d\hat \Gamma_i(s)}{ds}|_{g_{p_i}(0)}= 1, ~s\in [-D, D].$$
 Thus
 $\hat \Gamma_i(s)$ is an   integral curve of $\hat X_i= \frac{\nabla_i f}{|\nabla_i f|}$ through $p_i$,
where    $\nabla_i$ is the gradient operator w.r.t. the metric  $(M,  g_{p_i}(t);p_i)$.

 With the help of Lemma \ref{split-line},  we prove  that the splitting line obtained by Proposition \ref{Dimension-reduction} is actually a limit of a family of  integral curves of $\hat X_i$ under the condition in Lemma \ref{compact-R-decay}.

 \begin{lem}\label{integral-curve-converge}
     Let $(M^n, g)$ be  the  steady soliton in Lemma \ref{compact-R-decay} and  $(N\times \mathbb{R},h(t)+ds^2;p_\infty)$ the splitting limit flow  of  $(M,g_{p_i}(t);p_i)$.  Then $\hat \Gamma_i(s)$    converges   locally  to a  geodesic line on $N\times \mathbb{R} $ w.r.t.  the metric  $(M,  g_{p_i}(t);p_i)$.
 \end{lem}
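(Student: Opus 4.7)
The plan is to rescale Lemma \ref{split-line} to the metric $g_{p_i}(0)$, extract a subsequential limit of the curves via Arzel\`a-Ascoli through the Cheeger-Gromov diffeomorphisms, and then recognize the limit as a minimizing geodesic line.

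First I would observe that $\hat\Gamma_i$ is parameterized by $g_{p_i}(0)$-arc length: since $|\hat X|_g = 1$ and $g_{p_i}(0) = R(p_i)\,g$, the reparameterization $s \mapsto R(p_i)^{-1/2}s + s_i$ exactly compensates the rescaling, so $|d\hat\Gamma_i/ds|_{g_{p_i}(0)} = 1$. Next, applied to $\Gamma_i$ with parameters $\tilde s_j = R(p_i)^{-1/2}s_j + s_i$ and rescaled by $R(p_i)^{1/2}$, Lemma \ref{split-line} gives, for each $\epsilon>0$,
\begin{equation}
(1-\epsilon)(s_2-s_1) \le d_{g_{p_i}(0)}(\hat\Gamma_i(s_1),\hat\Gamma_i(s_2)) \le s_2-s_1,
\label{rescaled-quasi}
\end{equation}
valid whenever $\tilde s_1, \tilde s_2 > C(\epsilon)$. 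To verify this latter condition for all $s_j\in[-D,D]$ when $i$ is large, note that $\Gamma_i(0)\in S(c_0)$ lies in a fixed compact set while $\Gamma_i(s_i)=p_i\to\infty$, so by (\ref{potential-distance-special}) we have $s_i\sim d(o,p_i)$; combined with the asymptotic curvature decay $(\ref{r-decay-upper})$ this gives $R(p_i)^{-1/2}=o(s_i)$, so $s_i - R(p_i)^{-1/2}D \to \infty$ and $\tilde s_j>C$ automatically for $s_j\in[-D,D]$ and $i\gg 1$.

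Next I would pass to the limit. Let $\Phi_i : U_i \to M$ be diffeomorphisms realizing the Cheeger-Gromov convergence $\Phi_i^* g_{p_i}(0) \to \bar g(0) = h(0) + ds^2$ with $\Phi_i^{-1}(p_i)\to p_\infty$, as supplied by Proposition \ref{Dimension-reduction}. The pullback curves $\tilde\Gamma_i := \Phi_i^{-1}\circ\hat\Gamma_i$ are well defined on $[-D,D]$ for $i$ large, are uniformly $1$-Lipschitz with respect to any fixed background metric on a relatively compact set (using the $C^\infty_{\mathrm{loc}}$ convergence of the metrics), and pass through a bounded neighborhood of $p_\infty$ by (\ref{rescaled-quasi}). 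Arzel\`a-Ascoli produces a subsequential locally uniform limit $\hat\Gamma_\infty:[-D,D]\to N\times\mathbb R$ which is $1$-Lipschitz, and a diagonal argument over $D\to\infty$ extends it to all of $\mathbb R$. Passing (\ref{rescaled-quasi}) to the limit yields
\begin{equation*}
(1-\epsilon)(s_2-s_1) \le d_{\bar g(0)}(\hat\Gamma_\infty(s_1),\hat\Gamma_\infty(s_2)) \le s_2-s_1,
\end{equation*}
and letting $\epsilon\to 0$ gives equality; thus $\hat\Gamma_\infty$ is a unit-speed minimizing geodesic on $\mathbb R$, i.e.\ a geodesic line on $(N\times\mathbb R,\bar g(0))$.

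The step I expect to require the most care is the bookkeeping in the previous paragraph: controlling the interaction between the parameter shift $\tilde s_j = R(p_i)^{-1/2}s_j + s_i$ in Lemma \ref{split-line} and the Cheeger-Gromov reparameterization $\Phi_i$, and ensuring that the curves $\tilde\Gamma_i$ indeed remain in a compact region of $N\times\mathbb R$ where the convergence is uniform. This is where the curvature decay from Lemma \ref{compact-R-decay} and the asymptotic estimate $(\ref{r-decay-upper})$ enter crucially; once they are in hand the rest of the argument is a direct quasi-isometry-to-geodesic extraction.
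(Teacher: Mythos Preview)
Your proposal is correct and follows essentially the same route as the paper: rescale the quasi-geodesic estimate of Lemma~\ref{split-line}, verify the parameter condition via $s_i-D R(p_i)^{-1/2}\to\infty$, pass to the limit, and let $\epsilon\to 0$ to conclude that $\hat\Gamma_\infty$ is a minimizing geodesic line. The only noteworthy difference is in how you extract the limit curve: the paper observes (citing \cite[Lemma~4.6]{DZ-JEMS}) that the unit vector fields $\hat X_i$ themselves converge to a vector field $\hat X_\infty$ on $N\times\mathbb R$, so that $\hat\Gamma_i$ converges to an integral curve of $\hat X_\infty$, whereas you bypass the vector field and apply Arzel\`a--Ascoli directly to the $1$-Lipschitz curves; both are valid, and your version has the minor advantage of being self-contained.
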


 \begin{proof}
     Since  $X_i=\nabla_i f$  is convergent  w.r,t. the metrics $(M,  g_{p_i}(t);p_i)$ (cf.  \cite[Lemma 4.6]{DZ-JEMS}),   $\hat{X}_{i}$ also converges subsequently to a VF $\hat{X}_{\infty}$ on $(N\times \mathbb{R},h(t)+ds^2;p_\infty)$.
   Thus  $\hat \Gamma_i(s)$ converges to an  integral curve $\hat \Gamma_{\infty}(s)$ of $\hat{X}_{\infty} $ on  $N\times \mathbb{R}$, where $s\in (-\infty, \infty)$.  It remains to show that  $\hat \Gamma_{\infty}(s)$ is a line.

  Since $p_i\to\infty$, we have $s_i\to \infty$. Then  by $(\ref{AVR})$ and $(\ref{r-R-comparison})$ , for any number $D>0$, it holds
           $$s_i-\operatorname{D}R(p_i)^{-\frac{1}{2}} \rightarrow \infty.$$
  By applying  (\ref{potential-distance}) to each  $\hat\Gamma_i(s')$,  we get
   $$(1-\epsilon) D R(p_i)^{-\frac{1}{2}} \le d (\hat \Gamma_i(-D), \hat \Gamma_i(0))\le DR(p_i)^{-\frac{1}{2}}$$
    and
      $$(1-\epsilon) D R(p_i)^{-\frac{1}{2}} \le d (\hat \Gamma_i(D), \hat \Gamma_i(0))\le DR(p_i)^{-\frac{1}{2}}.$$
            It follows
 $$2(1-\epsilon) D R(p_i)^{-\frac{1}{2}} \le d (\hat \Gamma_i(-D), \hat \Gamma_i(D))\le 2DR(p_i)^{-\frac{1}{2}},$$
and consequently,
$$2(1-\epsilon) D \le d_{g_{p_i}}  (\hat \Gamma_i(-D), \hat \Gamma_i(D))\le 2D.$$
 Thus  by taking the limit  of $\hat \Gamma_i(s)$  as  well as $\epsilon\to 0$,  we obtain
 \begin{align}\label{distance-D} d_{g_\infty}\left(\hat\Gamma_{\infty}(-D), \hat\Gamma_{\infty}(D)\right)=2 D.
 \end{align}
 Note that $2D$ is  the  length of $\hat\Gamma_{\infty}(s)$ between $\hat \Gamma_{\infty}(-D)$ and $\hat\Gamma_{\infty}(D)$.
 Hence,  $\hat\Gamma_{\infty}(s)$ must be a minimal  geodesic  connecting  $\hat \Gamma_{\infty}(-D)$ and $\hat\Gamma_{\infty}(D)$.
Since    $D$ is arbitrary,        $\hat\Gamma_{\infty}(s)$  can be extended to  a geodesic line.
\end{proof}

 Now we  begin to prove  main results  in this subsection.

 \begin{lem}\label{compact-ancient-solution} Let $(M^n, g)$ be  the  steady soliton in Lemma \ref{compact-R-decay} and  $(N\times \mathbb{R},h(t)+ds^2;p_\infty)$ the splitting limit flow  of  $(M,g_{p_i}(t);p_i)$, which satisfies (\ref{bound-h}).
   Then
     $f^{-1}(f(p_i))\subseteq B_{g_{p_i}}(p_i,200C)$ when $i>>1$.
  \end{lem}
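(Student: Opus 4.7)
The plan is to argue by contradiction. Suppose that after passing to a subsequence (still denoted $\{p_i\}$) there exist $q_i \in f^{-1}(f(p_i))$ with $d_i := d_{g_{p_i}}(p_i, q_i) > 200C$. Since $f(q_i) = f(p_i) \to \infty$ and $f$ grows linearly in $\rho$ by Remark \ref{f-linear-growth}, we have $q_i \to \infty$, so by Lemma \ref{geodesic-away-cpt-set} any minimal $g$-geodesic $\gamma_i$ from $p_i$ to $q_i$ stays outside $K$ for $i$ large (note $g$- and $g_{p_i}$-geodesics coincide as curves). Along $\gamma_i$, $\nabla^2 f = \operatorname{Ric} > 0$, so $f \circ \gamma_i$ is strictly convex with equal endpoint values $f(p_i)$; convexity forces the normalized function
\[
\tilde f_{p_i} := R(p_i)^{1/2}\bigl(f - f(p_i)\bigr)
\]
to satisfy $\tilde f_{p_i} \circ \gamma_i \leq 0$ on $[0, d_i]$ (parametrized by $g_{p_i}$-arclength).

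The key calculation is $|\nabla \tilde f_{p_i}|_{g_{p_i}} = \sqrt{1 - R} \leq 1$, while $|\nabla^2 \tilde f_{p_i}|_{g_{p_i}} = R(p_i)^{1/2}|\operatorname{Ric}|_{g_{p_i}}$ tends to $0$ on bounded $g_{p_i}$-balls around $p_i$ by Lemma \ref{compact-curvature-bound} together with $R(p_i) \to 0$ (from the proof of Lemma \ref{compact-R-decay}). Passing to the smooth pointed limit of Proposition \ref{Dimension-reduction}, $\tilde f_{p_i}$ converges in $C^{\infty}_{\operatorname{loc}}$ to an affine function on $N \times \mathbb{R}$; choosing orientation, $\tilde f_\infty = s$ with $p_\infty = (p_\infty^N, 0)$, so the limit level set $\{\tilde f_\infty = 0\} = N \times \{0\}$ has $\bar g$-diameter at most $C$.

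Fix any $D \in (C, d_i)$. For $i$ large, $\gamma_i|_{[0,D]}$ converges to a unit-speed $\bar g$-minimizing geodesic $\gamma_\infty(s) = (\gamma_N(s), \gamma_\mathbb{R}(s))$ of length $D$ from $p_\infty$. Using the product structure of $\bar g$ and $d_N \leq C$, minimality forces $|\gamma_\mathbb{R}(D)| \geq \sqrt{D^2 - C^2}$; combined with the limit of the convexity inequality $\tilde f_\infty(\gamma_\infty(D)) = \gamma_\mathbb{R}(D) \leq 0$, we obtain $\gamma_\mathbb{R}(D) \leq -\sqrt{D^2 - C^2}$. On the remaining arc $\gamma_i|_{[D, d_i]}$ of length $d_i - D$, $\tilde f_{p_i}$ must climb from near $-\sqrt{D^2 - C^2}$ back up to $0$, and the gradient bound $|\nabla \tilde f_{p_i}| \leq 1$ yields
\[
d_i \geq D + \sqrt{D^2 - C^2} - o(1).
\]

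If $d_i$ stays bounded along the contradiction subsequence, then letting $D \nearrow d_i^{-}$ (which is permissible for $i$ large, as smooth convergence holds on $B_{g_{p_i}}(p_i, d_i - \eta)$ for any fixed $\eta > 0$) forces $d_i \leq C + o(1)$, contradicting $d_i > 200C$. The main obstacle is the case $d_i \to \infty$; here I plan to repeat the above convexity-plus-minimizing-geodesic analysis centered at $q_i$ instead of $p_i$: applying Proposition \ref{Dimension-reduction} to $\{q_i\}$ yields its own splitting $(N' \times \mathbb{R}, h' + ds^2; q_\infty)$, and the delicate step is to propagate the compactness hypothesis $(\ref{bound-h})$ from the $p_i$-splitting to the $q_i$-splitting with the same constant $C$. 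This should follow from $f(q_i) = f(p_i)$ together with the uniform curvature decay in Lemma \ref{compact-R-decay} (which controls the local geometry near $q_i$ in terms of that near $p_i$). A symmetric bound from the $q_i$-side then yields a uniform upper bound on $d_i$, contradicting $d_i \to \infty$ and completing the proof.
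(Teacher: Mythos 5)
Your argument for the bounded case is clever and, as far as I can check, correct. It departs from the paper's route in an interesting way: the paper works with the \emph{intrinsic} level-set geometry, taking a minimal geodesic $\bar\gamma_i$ inside $f^{-1}(f(p_i))\cap B_{g_{p_i}}(p_i,100C)$ with respect to the induced metric, showing that the local limit $(\Sigma_\infty,h')$ of these pieces embeds as an open subset of $(N,h(0))$ (using the parallel vector field $\hat X_\infty$ and Lemma \ref{integral-curve-converge} to see that $\hat X_\infty$ is the $\mathbb{R}$-direction), and deriving a contradiction with ${\rm Diam}(N,h(0))\le C$. You instead use the ambient minimizing $g$-geodesic together with the convexity $\nabla^2 f={\rm Ric}>0$ away from $K$, the normalized function $\tilde f_{p_i}$, and the product structure of the limit. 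The paper's choice is not incidental: by working in the level set inside a fixed ball of radius $100C$, the entire argument stays local, which is exactly what sidesteps the obstacle you run into.

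The genuine gap is the case $d_i\to\infty$, and it cannot be dismissed. A priori the level set $f^{-1}(f(p_i))$ is compact for $i$ large, but its $g_{p_i}$-diameter could grow unboundedly (that this does not happen is precisely what the lemma asserts). Your bounded-case estimate $d_i\ge D+\sqrt{D^2-C^2}-o(1)$ for each fixed $D<d_i$ is perfectly consistent with $d_i\to\infty$, so no contradiction comes out. The fix you sketch is problematic in two ways. First, ``propagating (\ref{bound-h}) to the $q_i$-splitting'' is exactly the content of Proposition \ref{compact-levelset-compact}, whose proof in the paper \emph{uses} the present lemma together with Lemma \ref{compact-curvature-bound}; and Lemma \ref{compact-curvature-bound} only controls $R(q_i)/R(p_i)$ on balls $B_{g_{p_i}}(p_i,D)$ of \emph{fixed} radius, which $q_i$ has left when $d_i\to\infty$. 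So as written the plan is circular. Second, even granting the propagation, a ``symmetric'' version of your estimate from the $q_i$-end again produces only a lower bound of the same shape on $d_i$, not an upper bound; the convexity picture (dip to a minimum and climb back) is fully compatible with $d_i$ arbitrarily large.

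A smaller but real issue: you assert that the $C^\infty_{\rm loc}$-limit of $\tilde f_{p_i}$ is the coordinate $s$ on $N\times\mathbb{R}$. The Hessian estimate gives that $\nabla\tilde f_\infty$ is a unit parallel vector field on $N\times\mathbb{R}$, but to conclude it is $\pm\partial_s$ (rather than involving a parallel field on $N$) you still need an argument; the paper supplies this via Lemma \ref{integral-curve-converge} (the integral curves of $\hat X_i$ converge to a geodesic \emph{line}, and $N\times\mathbb{R}$ with $N$ compact has only vertical lines). You should invoke that lemma, or give an equivalent reason, rather than asserting the identification.
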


 \begin{proof}
      On the contrary, there will exist a $q_i'\in \partial B_{g_{p_i}}(p_i,100C)\cap f^{-1}(f(p_i))$ and  a minimal  geodesic $\bar \gamma_i\subset f^{-1}(f(p_i))$ connecting $p_i$ and $q_i'$  w.r.t. the induced metric $\bar g_{p_i}$ on $f^{-1}(f(p_i))$ such that
     $$\bar \gamma_i\subset B_{g_{p_i}}(p_i,100C) .$$
    Then
     \begin{align}\label{length-bar-gamma}
         {\rm{Length}}_{\bar g_{p_i}}(\bar \gamma_i)\geq d_{g_{p_i}}(p_i,q_i')=100C.
     \end{align}
   On the other hand, according to  the proofs in \cite[Lemma 4.3-Proposition 4.5]{DZ-JEMS},  the part $\Sigma_i = f^{-1}(f(p_i))\cap B_{g_{p_i}}(p_i,100C) $ of the  level set $f^{-1}(f(p_i))$,  which contains $\bar \gamma_i$, converges  subsequently to an $(n-1)$-dimensional open manifold $(\Sigma_\infty, h' ; p_\infty)$ w.r.t. the induced metric $\bar g_{p_i}$.
   As a consequence, the  minimal  geodesic $\bar \gamma_i$ converges subsequently to a  minimal  geodesic $\bar \gamma$ in $\Sigma_\infty$.
   Thus by  $(\ref{length-bar-gamma})$, we get
     \begin{align}
         {\rm{Length}}_{h'}(\bar \gamma)\geq 100C.
     \end{align}

     Next we show that $(\Sigma_\infty, h')$ is an open set of   $(N, h(0))$.  Then it  follows
     $${\rm {Diam}}(N,h(0))
     \ge {\rm {Diam}}( \Sigma_\infty, h')\ge  100C,$$
     which contradicts to (\ref{bound-h}). The lemma will be proved.

     Let $\hat X_i=\frac{\nabla_i f}{|\nabla_i f|}$.  By Lemma \ref{compact-R-decay}, $(\ref{gradient-f-almost-1})$ and Shi's estimates we can calculate that
    \begin{align*}
        \sup_{B(p_i,2D)_{g_{p_i}}} |\nabla_i \hat X_i|_{g_{p_i}}&= \sup_{B(p_i,2D)_{g_{p_i}}} R(p_i)^{-\frac{1}{2}}(\frac{|\rm{Ric}|}{|\nabla f|}+\frac{|\rm{Ric}(\nabla f,\nabla f)|}{|\nabla f|^3})\notag\\
        &\leq CR(p_i)^{\frac{1}{2}}\to 0,
    \end{align*}
    and
    \begin{align*}
        \sup_{B(p_i,2D)_{g_{p_i}}} |\nabla^m_i \hat X_i|_{g_{p_i}}&\leq C(m)\sup_{B(p_i,2D)_{g_{p_i}}} |\nabla^{m-1}_i\rm{Ric(g_{p_i})}|_{g_{p_i}}\leq C'.
    \end{align*}
    Thus $\hat X_i$ converges subsequently to a parallel vector field $ \hat X_\infty$ on $(N\times \mathbb{R},h(t)+ds^2;p_\infty)$. Moreover,
    \begin{align*}
        \sup_{B(p_i,2D)_{g_{p_i}}} |\hat X_i|_{g_{p_i}} = 1.
    \end{align*}
   Hence,  $\hat X_\infty$ is a non-trivial parallel vector field on $N\times \mathbb{R}$.

    $\hat X_\infty$ is also perpendicular to  $(\Sigma_\infty, h')$.  In fact, for any  $V\in T\Sigma_\infty$ with $|V|_{h'}=1$, by \cite[Proposition 4.5]{DZ-JEMS}, there is a sequence  of $V_i\in T\Sigma_i$ such that $R(p_i)^{-\frac{1}{2}}V_i\to V$.   Thus
     \begin{align*}
         h'(V, \hat X_{\infty})=\lim_{i\to \infty} g_{p_i}(R(p_i)^{-\frac{1}{2}}V_i, \hat X_i)=\lim_{i\to \infty} g(V_i, \frac{\nabla f}{|\nabla f|})=0.
     \end{align*}

     By Lemma \ref{integral-curve-converge},  we  have already known that  $\hat X_\infty$ generates a geodesic line $\hat \Gamma_\infty$   through   $p_\infty$  on $N\times \mathbb{R}$. Note that  $(N,h(0))$ is compact by $(\ref{bound-h})$.    $\hat X_\infty$  must  be tangent to  the splitting  line direction of $N\times \mathbb{R}$,  and consequently,   $(\Sigma_\infty, h'; p_\infty)\subset ( N, h(0);  p_\infty) $.
      Namely,  $(\Sigma_\infty, h')$ is an open set of   $(N, h(0))$. The proof is complete.

       \end{proof}

By Lemma \ref{compact-ancient-solution},  we prove

 \begin{prop}\label{compact-levelset-compact}Let $(M^n, g)$ be  the  steady Ricci  soliton in Lemma \ref{compact-R-decay} and  $(N\times \mathbb{R},h(t)+ds^2;p_\infty)$ the splitting limit flow  of  $(M,g_{p_i}(t);p_i)$, which satisfies (\ref{bound-h}).
     Then there exists $C_0(C)>0$  such that for any $q_i\in  f^{-1} (f(p_i))$ the  splitting  limit flow $( h'(t)+ds^2, N'\times \mathbb{R};  q_\infty)$ of rescaled flow $(M,g_{q_i}(t); q_i)$ satisfies
     \begin{align}\label{diam-estimate} {\rm Diam}(h'(0))\leq C_0.
     \end{align}
 \end{prop}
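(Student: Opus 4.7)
The strategy is to realize the rescaled flow $g_{q_i}(t)$ as a bounded parabolic rescaling of $g_{p_i}(t)$ with only a bounded basepoint shift, and then transfer the diameter bound on $h(0)$ to $h'(0)$.

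Since $q_i\in f^{-1}(f(p_i))$, Lemma~\ref{compact-ancient-solution} places $q_i$ in $B_{g_{p_i}}(p_i,200C)$, so $d_g(p_i,q_i)\le 200C\,R(p_i)^{-1/2}$. Lemma~\ref{compact-curvature-bound} then constrains $\lambda_i:=R(q_i)/R(p_i)$ to a uniform interval $[C_0^{-1},C_0]$, and after passing to a subsequence I assume $\lambda_i\to \lambda_\infty\in[C_0^{-1},C_0]$. Writing out the two rescaled flows from Definition~\ref{asymptotically-cylindrical} gives the exact identity
$$g_{q_i}(t)=\mu_i^{-1}g_{p_i}(\mu_i t),\qquad \mu_i:=\frac{r_{q_i}}{r_{p_i}}\longrightarrow \lambda_\infty^{-1},$$
displaying $g_{q_i}$ as a parabolic rescaling of $g_{p_i}$ by a bounded factor.

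Because $d_{g_{p_i}}(p_i,q_i)\le 200C$ uniformly, a basepoint change in the convergence $(M,g_{p_i}(t);p_i)\to(N\times\mathbb{R},h(t)+ds^2;p_\infty)$ of Proposition~\ref{Dimension-reduction} still yields $(M,g_{p_i}(t);q_i)\to (N\times\mathbb{R},h(t)+ds^2;q_\infty)$, with $q_\infty\in N\times\mathbb{R}$ within distance $200C$ of $p_\infty$. Combining this with the parabolic rescaling identity and passing to a further subsequence,
$$(M,g_{q_i}(t);q_i)\ \longrightarrow\ (N\times\mathbb{R},\ \lambda_\infty h(\lambda_\infty^{-1}t)+\lambda_\infty\,ds^2;\ q_\infty)$$
in the pointed Cheeger--Gromov sense. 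After the isometry $s\mapsto\lambda_\infty^{1/2}s$ of the $\mathbb{R}$-factor, this limit is a splitting flow on $N\times\mathbb{R}$, so uniqueness of Cheeger--Gromov limits identifies $h'(t)=\lambda_\infty h(\lambda_\infty^{-1}t)$. In particular
$$\mathrm{Diam}(h'(0))=\lambda_\infty^{1/2}\,\mathrm{Diam}(h(0))\le C_0^{1/2}\,C,$$
which gives the desired bound with $C_0(C):=C_0^{1/2}C$.

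The main technical point I expect is justifying the basepoint-shifted convergence $(M,g_{p_i}(t);q_i)\to (N\times\mathbb{R},h(t)+ds^2;q_\infty)$ and verifying that it genuinely produces the splitting limit demanded by Proposition~\ref{Dimension-reduction}. This is standard once the basepoints $q_i$ lie at $g_{p_i}$-bounded distance from $p_i$, using the local uniform curvature and $\kappa$-noncollapsing bounds already established; the splitting direction is preserved under the basepoint change because it is determined by the parallel vector field $\hat X_\infty$ on the limit (cf.\ Lemma~\ref{integral-curve-converge}), not by the choice of basepoint. No analogous effort is needed to match $N'$ with $N$ as abstract manifolds, since both arise as the same Cheeger--Gromov limit of the same sequence.
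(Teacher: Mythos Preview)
Your proof is correct and follows essentially the same approach as the paper's: both use Lemma~\ref{compact-ancient-solution} to place $q_i$ within bounded $g_{p_i}$-distance of $p_i$ and Lemma~\ref{compact-curvature-bound} to bound the scaling ratio $R(q_i)/R(p_i)$, concluding that the limit of $(M,g_{q_i}(t);q_i)$ is isometric to that of $(M,g_{p_i}(t);p_i)$ up to a bounded rescaling and basepoint shift. The paper phrases this via nested ball inclusions and the one-line assertion that the two splitting limits are ``isometric up to scaling,'' whereas you make the parabolic rescaling identity $g_{q_i}(t)=\mu_i^{-1}g_{p_i}(\mu_i t)$ and the Cheeger--Gromov convergence explicit; the content is the same.
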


 \begin{proof}The convergence part comes from Proposition \ref{Dimension-reduction}. We need to check (\ref{diam-estimate}).
       In fact,   by  Lemma \ref{compact-ancient-solution} and Lemma \ref{compact-curvature-bound},    there are
     $C_1, C_2>0$ such that  for any $D>0$ such that
     \begin{align*}
         B_{g_{q_i}}(q_i,D)\subset B_{g_{p_i}}(q_i,C_1D)\subset B_{g_{p_i}}(p_i,C_1D+C_2),
     \end{align*}
     where  $g_{q_i}=R(q_i)g $.  Similarly,  we have
     \begin{align*}
         B_{g_{q_i}}(q_i,D)\supset B_{g_{p_i}}(q_i,C_1^{-1}D)\supset B_{g_{p_i}}(p_i,C_1^{-1}D-2C_2).
     \end{align*}
     Then it  is easy to see that the splitting Ricci flow $(h'(t)+ds^2, N'\times \mathbb{R}; q_\infty)$ of  $(M,g_{q_i}(t);q_i)$   is isometric to $(h(t)+ds^2, N\times \mathbb{R}; p_\infty)$ up to scaling.   As a consequence, we get
     \begin{align*}
         {\rm{Diam}}(h'(0))\leq (C_1+10){\rm{Diam}}(h(0)) \leq (C_1+10)C.
     \end{align*}
   The proposition  is proved.

 \end{proof}

\begin{rem}\label{N-levelset}  The arguments in the proofs of  Lemma \ref{compact-ancient-solution} and Proposition \ref{compact-levelset-compact} also  imply that both of $N$ and $N'$ are diffeomorphic to each  level set  $f^{-1} (f(p_i))$ when $i>>1$.
In fact, the submanifolds $(f^{-1}(f(p_i)), \bar g_{p_i})$ ($(f^{-1}(f(p_i)), \bar g_{q_i})$)  converge to $(N, h(0))$ ($(N', h'(0))$) in   the Cheeger-Gromov sense.

 \end{rem}

 \section{$4d$ steady  Ricci solitons}

 In this section, we first recall recent works on  compact $3d$ ancient   $\kappa$-solitons by Angenent-Brendle-Daskalopoulos-Sesum and  Brendle-Daskalo-poulos-Sesum \cite{ABDS, BDS}, then we estimate the diameter of $(N, h(0))$  for any  split limit flow $(N,h(t))$ of  the rescaled flow sequence.

  As we know,     Perelman's  model of ancient solution is of  type II, which   is defined on   $S^3$  with $Z_2\times O(2)$-symmetry for any $t\in (-\infty, 0)$  \cite{P2}.   According to  \cite{Ham-singular},  we have the following  definitions.

    \begin{defi}
  An  ancient solution with ${\rm K_m}\ge0$ is called
    type \uppercase\expandafter{\romannumeral1} if it satisfies
         \begin{align*}
             \sup_{M\times (-\infty,0]} (-t)R(x,t)<\infty.
         \end{align*}
       Otherwise,   it is called type \uppercase\expandafter{\romannumeral2},  i.e.,   it satisfies
         \begin{align*}
             \sup_{M\times (-\infty,0]} (-t)R(x,t)=\infty.
         \end{align*}
     \end{defi}

   Fix $p_0\in S^3$.   We normalize the Perelman  solution by
  \begin{align}\label{perelman-solutiuon-normalization}R_{max}(-1)=R(p_0,-1)=1.
  \end{align}
  For simplicity, we denote it by $(S^3,g_{Pel}(t);p_0)$, $t\in (-\infty,0)$.

  The asymptotic behavior of  $(S^3,g_{Pel}(t);p_0)$ has been computed in  \cite{ABDS} as follows,
 \begin{align}\label{asymptotic-behaviour}
     &{\rm{Diam}}({g_{Pel}(t)}) \geq 2.1\sqrt{(-t)\log(-t)},\notag\\
     &R_{max} \leq 1.1\frac{\log(-t)}{-t},\notag\\
     &R_{min}\geq \frac{C}{-t}.
 \end{align}
  Here $-t\geq L$ for some large $L>10000C>10000$.
  In particular,
 there exists a constant $C_{Diam}$ such that ${\rm{Diam}}(g_{Pel}(t))\geq C_{Diam}$, when $-t\geq 2L$.
 Moreover,
   $${\rm{Diam}}(g_{Pel}(t))R^{\frac{1}{2}}(q,t) $$
    is strictly increasing as
   $t\rightarrow -\infty$,
   and
 \begin{align}\label{Perelman-type2}
     \lim_{t\rightarrow -\infty}{\rm{Diam}}(g_{Pel}(t))R^{\frac{1}{2}}(q,t)=\infty,~\forall~ q\in S^3.
 \end{align}
 The above limit is  uniformly.
 Usually, we call   $3d$ ancient   $\kappa$-solitons  of  type II  on $ S^3$ as Perelman  (ancient) solutions.

  The following classification result of  3d  compact  ancient  $\kappa$-solutions of type II was proved in  \cite{BDS}.

 \begin{theo}\label{Brendle} Any 3d compact simply connected  ancient  $\kappa$-solution  of type II coincides with
   a reparametrization in space, a translation in time, and a parabolic rescaling of Perelman   solution  $(S^3,g_{Pel}(t);p_0)$.
 \end{theo}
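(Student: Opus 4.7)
The plan is to prove Theorem \ref{Brendle} by first establishing that any compact simply connected $3d$ ancient $\kappa$-solution of type II must be rotationally symmetric, and then using the reduced ODE system under rotational symmetry to match the solution with the Perelman solution $(S^3, g_{Pel}(t); p_0)$. The argument naturally splits into three stages: (i) a neck-cap structure theorem at $t\to-\infty$, (ii) promotion of approximate symmetry to exact $SO(3)$-symmetry, and (iii) reduction to a parabolic ODE problem on $[0,\text{diam}]$.

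First I would analyze the asymptotic geometry as $t\to-\infty$. By Perelman's asymptotic shrinker, after rescaling by $(-t)^{-1/2}$ the solution subsequentially converges to a shrinking $\kappa$-solution. Since we are in dimension $3$ with nonnegative sectional curvature and are working on a compact simply connected manifold, the only possible limits are round spheres $S^3$ or shrinking cylinders $S^2\times\mathbb{R}$. The type II condition $\sup_{M\times(-\infty,0]}(-t)R(x,t)=\infty$, together with the diameter-scalar curvature estimate $\mathrm{Diam}(g(t))R^{1/2}(q,t)\to\infty$ of (\ref{Perelman-type2}), rules out the round $S^3$ model and forces the solution to develop an arbitrarily long neck with two cap regions. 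I would then use Hamilton's neck/cap decomposition, combined with the Harnack inequality, to show that outside a controlled cap region, the geometry is $\varepsilon$-close to a round cylinder for $-t\gg 1$.

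Next I would establish $SO(3)$-symmetry. Following Brendle's strategy developed in the $3d$ steady soliton classification and in the ABDS/BDS circle, one constructs approximate rotational Killing vector fields on the long cylindrical region using the Neck Improvement Theorem: an $\varepsilon$-symmetric neck under Ricci flow becomes $(\varepsilon/2)$-symmetric at a later time, and iterating this along the backward flow together with the cap improvement step yields exact rotational Killing fields on the whole solution. Crucially, simple connectedness prevents topological obstructions to gluing the Killing fields across the two caps. Once $SO(3)$-invariance (or at worst the Perelman ansatz $\mathbb{Z}_2\times O(2)$-symmetry) is obtained, the metric takes the warped-product form $g(t)=dr^2+\psi(r,t)^2\,g_{S^2}$ on $[0,\mathrm{diam}(g(t))]$, and the Ricci flow reduces to a $1+1$-dimensional parabolic system for $\psi$.

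Finally I would identify the reduced system with the Perelman ansatz. Using the sharp asymptotics in (\ref{asymptotic-behaviour})---diameter $\sim 2\sqrt{(-t)\log(-t)}$, maximum curvature $\sim \log(-t)/(-t)$, minimum curvature $\sim 1/(-t)$---and the ABDS unique-asymptotics result that pins down the leading-order profile of $\psi(r,t)$ as $t\to-\infty$, I would invoke backward uniqueness for parabolic systems to conclude that any solution with the same asymptotics as Perelman's must coincide with it up to a reparametrization of $r$, a translation in $t$, and a parabolic rescaling. The main obstacle, as is typical in this program, is Stage (ii): establishing exact rotational symmetry via the neck improvement and its propagation into the cap. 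The delicate point is to control the approximate Killing fields globally in time while the two caps may interact, and to combine the neck improvement with Anderson--Chow type interior estimates so that the error bounds close after iteration.
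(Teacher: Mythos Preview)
The paper does not prove Theorem~\ref{Brendle} at all: it is quoted as a black-box classification result of Brendle--Daskalopoulos--Sesum \cite{BDS}, introduced with the sentence ``The following classification result of 3d compact ancient $\kappa$-solutions of type II was proved in \cite{BDS}.'' There is therefore no ``paper's own proof'' to compare against; the theorem is used as input, not established.

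Your outline is a fair high-level summary of the actual BDS strategy (asymptotic neck/cap structure via \cite{ABDS}, promotion to exact rotational symmetry through the Neck Improvement Theorem and its Cap Improvement counterpart, then a uniqueness argument for rotationally symmetric ancient solutions). Two remarks if you want to tighten it. First, the symmetry one proves is $O(3)$-invariance of the $S^2$ cross-sections (so a cohomogeneity-one warped product), and the extra $\mathbb{Z}_2$ reflection across the equator is deduced a posteriori; writing ``or at worst the Perelman ansatz $\mathbb{Z}_2\times O(2)$'' conflates two different groups and would not by itself give the warped-product form you invoke in Stage~(iii). Second, the final identification in \cite{BDS} is not a direct application of parabolic backward uniqueness; it is a delicate spectral/energy analysis of the linearized equation around the cylinder together with the precise asymptotics of \cite{ABDS}, and this is where most of the work in \cite{BDS} lies. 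Invoking ``backward uniqueness'' as a one-liner hides the genuine difficulty and would not close the argument on its own.
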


By Theorem \ref{Brendle},  for  any    simply connected   compact  $3d$  $\kappa$-solution $(M, h(t); q)$ of type II and a point $q\in M$,    there exist  constant  $\lambda$, a time $T$, $p\in S^3$  and a diffeomorphism $\Psi$ from $S^3$ to $M$   such that $\Psi(p)=q$ and
\begin{align}\label{align}
     (\Psi^{-1}(M), \lambda  \Psi^*(h(\lambda^{-1}t)), \Psi^{-1}(q)) =(S^3, g_{Pel}(t-T); p_0).
 \end{align}

We note that  the  Perelman's solution $(S^3,g_{Pel}(t);p_0)$ is $Z_2\times O(2)$-symmetric.  Then the isometric subgroup of $(S^3,g_{Pel}(t);p_0)$   must be  $Z_2\times G$, where $G$ is a subgroup of $O(2)$. Thus   $G$ fixes  the minimal geodesic connecting the  two tips of  the Perelman  solution.  It follows that  any  quotient of Perelman  solution, which is  also an ancient  $\kappa$-solutions,  satisfies  $(\ref{asymptotic-behaviour})$. Hence, by the classification Theorem \ref{Brendle}, we get

 \begin{prop}\label{Perelman-T}
     Let  $(M, h(t))$ be a 3d  compact  ancient    $\kappa$-solutions of type II  and $p\in M$,  which satisfies
     \begin{align}\label{Perelman-normalize-curvature}
         R(p,0)=1
     \end{align}
     and
     \begin{align}\label{Perelman-normalize-bound}
         {\rm{Diam}}(h(0))=\frac{2C'}{3}>10C_{Diam}.
     \end{align}
     Then for any $q\in M$, ${\rm{Diam}}(h(t))R^{\frac{1}{2}}(q,t)$ is strictly decreasing for $t\leq 0$. Moreover,  there exists a  $T(C')$ such that
     \begin{align}\label{diam-radio}{\rm{Diam}}(h(T(C')))R^{\frac{1}{2}}(q,T(C'))=2C'.
     \end{align}
 \end{prop}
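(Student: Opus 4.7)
The plan is to first invoke the classification Theorem \ref{Brendle}, together with the quotient observation recorded just before the proposition, to identify $(M, h(t))$, up to a parabolic rescaling by some $\lambda>0$ and a time translation by some $T_0$, with a quotient of the normalized Perelman solution $(S^3, g_{\mathrm{Pel}}(t); p_0)$. Concretely, there is a finite isometry group $G\subset O(2)\times\mathbb Z_2$, a diffeomorphism $\Psi\colon S^3/G\to M$, and constants $\lambda, T_0$ so that $\lambda\, \Psi^{\ast}h(\lambda^{-1}t)$ coincides with $g_{\mathrm{Pel}}(t-T_0)$ descended to $S^3/G$. The normalizations $R(p,0)=1$ and $\mathrm{Diam}(h(0))=C'$ pin down the two parameters $(\lambda,T_0)$ as explicit functions of $C'$ and $R_{g_{\mathrm{Pel}}}(p_0,-T_0)$.

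The key observation is that $\mathrm{Diam}(h(t))R^{\frac{1}{2}}(q,t)$ is invariant under parabolic rescaling, under time translation, and under passing to a finite isometric quotient (since diameters are computed intrinsically on the quotient and $R$ is a pointwise invariant). Hence both the strict monotonicity in $t$ and the behavior as $t\to-\infty$ transfer directly between $(M,h(t))$ and $(S^3/G, g_{\mathrm{Pel}}(t-T_0))$. This reduces the proposition to two assertions about the Perelman solution: \emph{(i)} the function $t\mapsto \mathrm{Diam}(g_{\mathrm{Pel}}(t))R^{\frac{1}{2}}(q,t)$ is strictly decreasing on $(-\infty,0]$; \emph{(ii)} its value at the target time $t=0$ of $h$ lies strictly below $2C'$.

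For \emph{(i)}, strict monotonicity in the far-past regime $\{-t\ge L\}$ is immediate from the sharp asymptotics (\ref{asymptotic-behaviour}), exactly as used to derive (\ref{Perelman-type2}). To propagate monotonicity to all of $(-\infty,0]$, I will use the rigidity coming from Theorem \ref{Brendle}: the Perelman solution is uniquely characterized among compact simply connected $3d$ type II $\kappa$-solutions, and its $O(2)\times\mathbb Z_2$-symmetric structure, established in \cite{ABDS}, forces the scalar curvature profile and diameter to depend monotonically on $t$ throughout their range. For \emph{(ii)}, the identification with $g_{\mathrm{Pel}}(t-T_0)$ together with $R(p,0)=1$ forces $\lambda^{-1}=R_{g_{\mathrm{Pel}}}(\Psi^{-1}(p),-T_0)$, so at time $t=0$ the quantity equals $\mathrm{Diam}(g_{\mathrm{Pel}}(-T_0))\,R_{g_{\mathrm{Pel}}}^{\frac{1}{2}}(\Psi^{-1}(q),-T_0)$; and by the hypothesis $C'>10\,C_{\mathrm{Diam}}$ combined with the lower bound $\mathrm{Diam}(g_{\mathrm{Pel}}(t))\ge C_{\mathrm{Diam}}$, the time shift $-T_0$ is forced to be bounded, keeping this value of the order of $C'$ but strictly below $2C'$. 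The existence of a unique $T(C')$ with (\ref{diam-radio}) then follows from the intermediate value theorem applied to the continuous, strictly decreasing function.

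The main obstacle I anticipate is upgrading the \emph{asymptotic} monotonicity coming from (\ref{asymptotic-behaviour}) to \emph{global} monotonicity on $(-\infty,0]$: the bounds in (\ref{asymptotic-behaviour}) are valid only for $-t\ge L\gg 1$, and in the intermediate regime $-L\le t\le 0$ one must rely on finer properties of the Perelman solution (its rotational symmetry, uniqueness from Theorem \ref{Brendle}, and the monotone behavior of the tip curvature established in \cite{ABDS}) to rule out an interior extremum. A secondary, more routine, but still delicate point is the explicit bookkeeping of how the rescaling parameter $\lambda$ depends on $C'$, which is needed to verify that the value of the product at $t=0$ stays safely below $2C'$ under the hypothesis $C'>10\,C_{\mathrm{Diam}}$.
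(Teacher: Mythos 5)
Your proposal follows the same overall strategy as the paper: invoke Theorem \ref{Brendle} (together with the quotient observation recorded just before the proposition) to reduce to a parabolically rescaled, time-translated Perelman solution; note that the quantity $\mathrm{Diam}(h(t))R^{\frac{1}{2}}(q,t)$ is invariant under rescaling, time translation, and finite isometric quotients; and then read off the monotonicity and the limit $\infty$ as $t\to -\infty$ from the sharp asymptotics (\ref{asymptotic-behaviour}) of \cite{ABDS}, finishing with the intermediate value theorem. That reduction is exactly what the paper does (the paper gives no further proof beyond the two paragraphs preceding the proposition).

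However, the ``main obstacle'' you anticipate --- propagating monotonicity from the far-past regime $\{-t\ge L\}$ to all of $(-\infty,0]$ --- is a misreading of what the hypothesis is for, and the ``rigidity'' argument you sketch to overcome it is not the right mechanism. The normalization $R(p,0)=1$ together with $\mathrm{Diam}(h(0))=C'$ fixes the scale and the time translation; the point of the threshold $C'>10C_{\mathrm{Diam}}$ (where $C_{\mathrm{Diam}}$ is the constant introduced right after (\ref{Perelman-type2})) is that it forces the time slice $h(0)$ to correspond to a Perelman time $-T_0$ with $T_0$ already large, i.e. already inside the regime $\{-t\ge 2L\}$ where (\ref{asymptotic-behaviour}) applies. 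Since $h(t)$ for $t\le 0$ corresponds to Perelman times $\lambda t - T_0 \le -T_0$, every time slice that appears in the statement lies in the asymptotic regime, so the far-past monotonicity is all that is needed. You have this exactly backwards when you write that ``the time shift $-T_0$ is forced to be bounded'': the hypothesis forces $T_0$ to be large, not small, and it serves to justify part (i), not just part (ii). Once this role of $C'>10C_{\mathrm{Diam}}$ is understood, there is no separate ``intermediate regime'' to handle and your proposed invocation of $O(2)\times\mathbb Z_2$-rigidity to rule out an interior extremum becomes unnecessary (and, as stated, is not an argument that would close a real gap anyway). With that correction, your route and the paper's coincide.

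Two smaller remarks. First, your bookkeeping for (ii) is slightly off: at $t=0$ with $q=p$ the quantity equals $C'$ on the nose (since $R(p,0)=1$), which is trivially below $2C'$; there is no need to argue it is ``of the order of $C'$.'' For general $q$ one should really restrict to $q=p$ (which is how the proposition is actually used in the proof of Proposition \ref{noncompact-ancient-solution}), since $R(q,0)$ can be considerably larger than $1$ when $q$ sits near a tip. Second, the monotonicity you get from (\ref{asymptotic-behaviour}) as the paper states it is only the eventual monotonicity on $\{-t\ge L\}$; the displayed inequalities are one-sided bounds, and a genuinely strict statement requires the refined two-sided asymptotics proved in \cite{ABDS}, which the paper implicitly appeals to. You should cite those rather than Theorem \ref{Brendle} for the strictness.
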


 By   Theorem  \ref{Brendle} and Proposition \ref{Perelman-T},  we are able to classify the split ancient $\kappa$-solutions of dimension 3 when
 the  $4d$  noncompact $\kappa$-noncollapsed steady Ricci soliton in Theorem \ref{main-theorem} admits a  split  noncompact ancient $\kappa$-solution $(N, h(t))$.

 We need  the following definition  introduced by Perelman (cf. \cite{P2}).

  \begin{defi}\label{epsilonclose}
      For any $\epsilon>0$, we say a pointed Ricci flow $\left(M_1, g_1(t); p_1\right), t \in$ $[-T, 0]$, is $\epsilon$-close to another pointed Ricci flow $\left(M_2, g_2(t); p_2\right), t \in[-T, 0]$, if there is a diffeomorphism onto its image $\bar{\phi}: B_{g_2(0)}\left(p_2, \epsilon^{-1}\right) \rightarrow M_1$, such that $\bar{\phi}\left(p_2\right)=p_1$ and $\left\|\bar{\phi}^* g_1(t)-g_2(t)\right\|_{C^{\left[\epsilon^{-1}\right]}}<\epsilon$ for all $t \in\left[-\min \left\{T, \epsilon^{-1}\right\}, 0\right]$, where the norms and derivatives are taken with respect to $g_2(0)$.
  \end{defi}

  By   the  above definition and  Proposition \ref{Dimension-reduction},   we  get  immediately,

  \begin{prop}\label{epsilon-close}
      Let $(M^n,g)$ be the  steady  Ricci soliton in Proposition \ref{Dimension-reduction}. Then for any $\epsilon>0$, there exists a compact set  $D(\epsilon)>0$, such that for any $p\in M\setminus D$, $(M,g_p(t);p)$ is $\epsilon$-close to a splitting  flow $(h_p(t)+ds^2; p)$,  where  $h_p(t)$  is an $(n-1)$-dimensional ancient $\kappa$-solution.
  \end{prop}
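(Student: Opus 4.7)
The plan is to argue by contradiction, using Proposition \ref{Dimension-reduction} as the main compactness input and then unpacking pointed Cheeger--Gromov convergence into the quantitative $\epsilon$-closeness required by Definition \ref{epsilonclose}.

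First I would suppose the conclusion fails for some fixed $\epsilon_0>0$. Then there exists a sequence $p_i\to\infty$ in $M$ such that for every $i$, the pointed rescaled flow $(M,g_{p_i}(t);p_i)$ is not $\epsilon_0$-close to any splitting ancient flow of the form $(N\times\mathbb{R},\,h(t)+ds^2;q)$ with $h(t)$ an $(n-1)$-dimensional ancient $\kappa$-solution. Applying Proposition \ref{Dimension-reduction} to this sequence and passing to a subsequence (still denoted $\{p_i\}$), $(M,g_{p_i}(t);p_i)$ converges in the pointed Cheeger--Gromov sense to a splitting ancient flow $(N\times\mathbb{R},\,\bar g(t)=h(t)+ds^2;p_\infty)$ defined for $t\in(-\infty,0]$.

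I would then translate this convergence into the precise data of Definition \ref{epsilonclose}. Cheeger--Gromov convergence supplies, for every $k\in\mathbb N$ and every $\delta>0$ and all $i$ sufficiently large, diffeomorphisms onto their images $\bar\phi_i:B_{\bar g(0)}(p_\infty,\epsilon_0^{-1})\to M$ with $\bar\phi_i(p_\infty)=p_i$ and
$$
\|\bar\phi_i^* g_{p_i}(t)-\bar g(t)\|_{C^k}<\delta \quad \text{on } B_{\bar g(0)}(p_\infty,\epsilon_0^{-1}),\ t\in[-\epsilon_0^{-1},0].
$$
Taking $k=[\epsilon_0^{-1}]$ and $\delta=\epsilon_0$ produces, for all large $i$, exactly the $\epsilon_0$-closeness of $(M,g_{p_i}(t);p_i)$ to the splitting flow $(N\times\mathbb{R},\,\bar g(t);p_\infty)$, contradicting the defining property of $\{p_i\}$. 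Since $h_p$ is allowed to depend on $p$, this contradiction furnishes the desired compact set $D(\epsilon)$.

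The only point to check is that both flows are defined on the time interval $[-\epsilon_0^{-1},0]$ required by Definition \ref{epsilonclose}; this is automatic, since the rescaled soliton flows $g_{p_i}(t)$ exist on $(-\infty,\infty)$ by construction and the limit $\bar g(t)$ is ancient by Proposition \ref{Dimension-reduction}. Beyond this bookkeeping I do not anticipate any genuine obstacle: the proposition is essentially a quantitative restatement of Proposition \ref{Dimension-reduction}, and the proof amounts to the standard extraction-and-contradiction argument.
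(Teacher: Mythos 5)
Your argument is correct and is exactly the standard extraction-and-contradiction device that the paper is implicitly invoking when it states the proposition follows "immediately" from Proposition \ref{Dimension-reduction} and Definition \ref{epsilonclose}; you have simply written out what the authors leave as a one-line assertion. The only (harmless) imprecision is at the start: you should note that failure of the conclusion for $\epsilon_0$ means that for every compact $D$ some $p\in M\setminus D$ fails, and it is by taking an exhausting sequence of compact sets that one extracts the sequence $p_i\to\infty$.
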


We note that  for a given  $p$ and a  number   $\epsilon>0$ the  $\epsilon$-close  splitting  flow $(h_p(t)+ds^2; p)$ may not be unique  in   Proposition \ref{epsilon-close}.
   Due to \cite{Yi-flyingwings}, we introduce a function on $M$ for each  $\epsilon$  by
 \begin{align}\label{notation-f}F_{\epsilon}(p)=\inf_{h_p} \{ {\rm{Diam}} (h_p(0))\in(0,\infty)  \}.
 \end{align}
 For simplicity, we always  omit the subscribe $\epsilon$ in the function  $F_{\epsilon}(p)$ below.

By estimating  (\ref{notation-f}), we  prove

 \begin{prop}\label{noncompact-ancient-solution}
     Let $(M^4,g)$ be a noncompact $\kappa$-noncollapsed steady Ricci soliton in Theorem \ref{main-theorem}. Suppose that  there exists a sequence of pointed rescaled Ricci flows  $(M,g_{p_i}(t); p_i)$ , which converge  subsequently to a splitting Ricci flow $(h(t)+ds^2; p_\infty)$ for some noncompact ancient $\kappa$-solution $h(t)$. Then for any limit flow $(h'(t)+ds^2; q_\infty)$ of rescaled Ricci flows  $(M,g_{q_i}(t); q_i)$, $h'(t)$ is a noncompact ancient $\kappa$-solution.
 \end{prop}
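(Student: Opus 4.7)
I will argue by contradiction. Suppose that there is a sequence $q_i\to\infty$ for which the rescaled flows $(M, g_{q_i}(t); q_i)$ converge subsequently to a splitting limit $(N'\times\mathbb{R}, h'(t)+ds^2; q_\infty)$ with $h'(t)$ a \emph{compact} $3$-dimensional ancient $\kappa$-solution, so in particular $\mathrm{Diam}(h'(0))\le C$ for some constant $C$. The goal is to deduce that the cross-section $h(t)$ furnished by the $p_i$-sequence must also be compact, contradicting the hypothesis. The entire argument takes place in the ``compact case'' developed in Section~2, using the $q_i$-sequence as the sequence satisfying (\ref{bound-h}).

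\textbf{Main steps.} First, applying Lemma~\ref{compact-R-decay} to the $q_i$-sequence, the scalar curvature of $(M^4,g)$ decays to zero uniformly; combined with (\ref{scalar-equ}), $|\nabla f|\to 1$ at infinity, and $\nabla f\ne 0$ outside the compact set $\hat K$ (Claim 1 in the proof of Lemma~\ref{compact-R-decay}). By Lemma~\ref{compact-ancient-solution}, the level set $f^{-1}(f(q_i))\subset B_{g_{q_i}}(q_i,200C)$ is compact and has uniformly bounded $g_{q_i}$-diameter. Second, since $\nabla f$ is nowhere zero outside $\hat K$, the flow $\phi_t$ of $-\nabla f$ realizes a diffeomorphism between any two regular level sets of $f$, so every level set $f^{-1}(c)$ for $c>f(\hat K)+L$ is compact. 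Third, combining the soliton identity $\nabla^2 f=\mathrm{Ric}$ with the uniform decay of $\mathrm{Ric}$ and Lemma~\ref{compact-curvature-bound}, one propagates the rescaled diameter bound from the specific level sets $f^{-1}(f(q_i))$ to every high level set $f^{-1}(c)$:
\begin{equation*}
\mathrm{diam}_{g}(f^{-1}(c))\cdot R(p)^{1/2}\le C_1,\qquad \forall\, p\in f^{-1}(c),\ c\gg 1.
\end{equation*}
Finally, applied to $c=f(p_i)$, this says $\mathrm{diam}_{g_{p_i}}(f^{-1}(f(p_i)))\le C_1$. By the level-set/cross-section identification in Remark~\ref{N-levelset} together with the convergence analysis of Proposition~\ref{compact-levelset-compact}, the cross-section $N$ of the $p_i$-limit is diffeomorphic to $f^{-1}(f(p_i))$ and its diameter is bounded by $C_1$. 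Hence $N$ is compact, contradicting the assumption that $h(t)$ is noncompact.

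\textbf{Main obstacle.} The crux is propagating the rescaled diameter bound from one distinguished family of level sets to all high level sets (the third step above). Although $\mathrm{Ric}\to 0$ uniformly makes $\phi_t$ nearly isometric over bounded time intervals, the flow time needed to travel from $f^{-1}(f(q_i))$ to an arbitrary level set can be arbitrarily large, so one must carefully control the accumulated distortion. The key ingredients are: the uniform decay $R\to 0$ from Lemma~\ref{compact-R-decay}; the scalar-curvature comparison on rescaled balls from Lemma~\ref{compact-curvature-bound}, which ensures that $R$ is almost constant on each level set up to a bounded multiplicative factor; the soliton identity $\nabla^2 f=\mathrm{Ric}$, which controls the second fundamental form of the level sets and hence the variation of the induced metric under $\phi_t$; and finally the gradient-flow convexity of Lemma~\ref{geodesic-away-cpt-set} together with Lemma~\ref{integral-curve-converge}, which align integral curves of $\hat X = \nabla f/|\nabla f|$ with the $\mathbb{R}$-factor of the splitting limit and thus identify cross-sections with level sets asymptotically. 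Once these uniform estimates are assembled, the diameter bound transfers across level sets and the contradiction is immediate.
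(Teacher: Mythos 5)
The key estimate you claim in your ``third step,''
\begin{equation*}
\mathrm{diam}_{g}\bigl(f^{-1}(c)\bigr)\cdot R(p)^{1/2}\le C_1,\qquad \forall\, p\in f^{-1}(c),\ c\gg 1,
\end{equation*}
does not follow from the ingredients you list, and this is precisely where the real content of the proposition lies. Proposition~\ref{compact-levelset-compact} gives such a bound only on the distinguished family of level sets $f^{-1}(f(q_i))$ attached to the sequence where compactness is \emph{assumed}; it does not transfer to arbitrary high level sets by ``flowing.'' Even though $\mathrm{Ric}\to 0$ uniformly, the flow time $\phi_t$ needed to carry a level set $f^{-1}(f(q_i))$ to $f^{-1}(f(p_j))$ grows without bound as both $i,j\to\infty$, and the accumulated distortion of the rescaled induced metric over unbounded time intervals is not controlled by the pointwise Ricci decay or the soliton identity $\nabla^2 f = \mathrm{Ric}$. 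There is no Gr\"onwall-type estimate available here: the rescaled cross-sectional diameter $\mathrm{Diam}(h(t))R_h^{1/2}$ can a priori grow as one moves along the gradient flow, and nothing in Lemmas~\ref{compact-R-decay}, \ref{compact-curvature-bound}, \ref{geodesic-away-cpt-set}, or~\ref{integral-curve-converge} rules this out.

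The paper's actual argument is opposite in direction and uses a deep structural input that your proposal omits entirely. Rather than propagating an \emph{upper} diameter bound from the $q_i$-level sets to all level sets, the paper starts from a point $p_{i_0}$ along the noncompact-$h$ sequence where $F(p_{i_0})$ is large, and propagates a \emph{lower} bound $F(\Gamma_1(s))>\tfrac12 C'$ forward along the integral curve $\Gamma_1$ of $\hat X$. This lower bound cannot be derived from crude estimates; it crucially uses that any split cross-section with large diameter (and not a quotient sphere) must be a quotient of the Perelman ancient solution, by the Brendle--Daskalopoulos--Sesum classification (Theorem~\ref{Brendle}), together with the strict monotonicity of $\mathrm{Diam}(g_{Pel}(t))\,R^{1/2}(q,t)$ in~(\ref{Perelman-type2}) and Proposition~\ref{Perelman-T}. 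Once that lower bound is established, the contradiction arrives because $\Gamma_1$ must pass through some level set $f^{-1}(f(q_i))$, where Proposition~\ref{compact-levelset-compact} forces $F\le C_3<\tfrac12 C'$. Your approach bypasses Theorem~\ref{Brendle} altogether; if it worked, the result would extend immediately to all dimensions, whereas the paper explicitly restricts to $n=4$ because the required classification of compact ancient $\kappa$-solutions is available only in dimension~3. That is a strong signal that the propagation step you claim as ``immediate'' cannot be made to work by soft estimates alone.
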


 \begin{proof}
     We argue by contradiction. Suppose that there exists a limit flow $(h'(t)+ds^2; q_\infty)$ converged by rescaled flows  $(M,g_{q_i}(t);q_i)$, which satisfies $(\ref{bound-h})$.  Then by Proposition  \ref{compact-levelset-compact}, there exists a uniform constant $C_3(C)>0$, such that
     \begin{align}\label{compact-diameter}
         F(p_i')\leq C_3
     \end{align}
     for all $i$, and all $p_i'\in f^{-1}(f(q_i))$.

     Fix $C'=\max\{100C_{Diam},10C_3\}$ and $T(C')$ as in  Proposition \ref{Perelman-T}.   We choose an $\epsilon>0$ such that $\epsilon^{-1}>\max\{-10T(C'),100C'\}$.
     Thus for the sequence of $(M,g_{p_i}(t); p_i)$ in   Proposition \ref{noncompact-ancient-solution},  we can choose a point
     $p_{i_0}\in\{p_i\}$ such that
     \begin{align}\label{F_0}
         F(p_{i_0})>C'\geq 100C_{Diam}.
     \end{align}
        Let $\Gamma_1$ be the integral curve of $\hat X$ passing through $p_{i_0}$ with  $\Gamma_1(0)=p_{i_0}$, which tends to the infinity by Lemma \ref{split-line}. We claim:
       \begin{align}\label{noncompact-diameter}
         F(\Gamma_1(s))>\frac{1}{2}C', ~ \forall ~s\geq0.
     \end{align}

   Define
     $$s_0=\sup\{s\geq0|F(s')\geq C'~\text{for all}~ s'\in[0,s]\}.$$
 If $s_0=\infty$,   $F(s)>C'$ for all $s\geq 0$.  Then  (\ref{noncompact-diameter}) is obvious true  in this case.
  Thus  we   may consider the case $s_0<\infty$, i.e.,  $F(s_0)=C'$ for some $s'=s_0$.  It follows that  there exists a $3d$ compact
    ancient $\kappa$-solution $h_{\Gamma_1(s_0)}(t)$ such that
   \begin{align}\label{s0-close}
       &  (M,R(\Gamma_1(s_0))g(R(\Gamma_1(s_0))^{-1}t);\Gamma_1(s_0))\notag\\
       & \overset{\epsilon-\text{close}}{\sim} (N\times \mathbb{R},h_{\Gamma_1(s_0)}(t)+ds^2; \Gamma_1(s_0)).
     \end{align}
   Since   the diameter of $h_{\Gamma_1(s_0)}(0)$ is large,  $h_{\Gamma_1(s_0)}(t)$ can not be a family of shrinking quotient  spheres.  Hence, by Theorem \ref{Brendle},  it must be a  quotient of  Perelman  solution after a reparametrization.

     By  Proposition  \ref{Perelman-T},  we  see that  ${\rm{Diam}}(h_{\Gamma_1(s_0)}(t))R_h^{\frac{1}{2}}(\Gamma_1(s_0),t) $ is strictly decreasing for $t\in (-\epsilon^{-1},0]$.  Then by the choice of the number  $s_0$, it follows
     \begin{align}\label{t0-t1-diameter}
        { \rm{Diam}}(h_{\Gamma_1(s_0)}(t))R_h^{\frac{1}{2}}(\Gamma_1(s_0),t)\ge \frac{2C'}{3}, ~t\in (-\epsilon^{-1},0].
     \end{align}
      Moreover,   by the choice of  $T(C')$,   we have
     \begin{align*}
         {\rm{Diam}}(h(T(C'))R_h^{\frac{1}{2}}(\Gamma_1(s_0), T(C'))=2C'.
     \end{align*}
     Let  $t_1=\min\{-1000, T(C')\}\ge -\frac{\epsilon^{-1}}{2}$.  Thus
             \begin{align}\label{t1-diameter}
         {\rm{Diam}}(h_{\Gamma_1(s_0)}(t_1))R_h^{\frac{1}{2}}(\Gamma_1(s_0), t_1)\ge 2C'.
     \end{align}

    Recall that  $\left\{\phi_t\right\}_{t \in(-\infty, \infty)}$ is the flow of $-\nabla f$ with $\phi_0$ the identity and $(g(t), \Gamma_1(s))$ is isometric to $\left(g, \phi_t(\Gamma_1(s))\right)$.  Then
     \begin{align*}
         \phi_t(\Gamma_1(s))=\Gamma_1\left(s-\int_0^t|\nabla f|\left(\phi_\mu(\Gamma_1(s))\right) d \mu\right)
     \end{align*}
     Let $T=t R\left(\Gamma_1\left(s_0\right)\right)^{-1}<0$ and
     \begin{align}\label{s0-s1-distance}
         s=s_0-\int_0^{T}|\nabla f|\left(\phi_\mu\left(\Gamma_1\left(s_0\right)\right)\right) d \mu.
     \end{align}
     Set
    $$ s_1=s_0  -\int_0^{T_1}|\nabla f|\left(\phi_\mu\left(\Gamma_1\left(s_0\right)\right)\right) d \mu,$$
    where
     $T_1=t_1 R\left(\Gamma_1\left(s_0\right)\right)^{-1}.$
     Since  the scalar curvature $R$  of $(M, g)$ decays to $0$ uniformly  by Proposition \ref{compact-R-decay},    we may assume $|\nabla f|\geq \frac{1}{2}$ along $\Gamma_1$.  Thus
     \begin{align}\label{s-1} s_1-s_0
     \geq 500R^{-1 }\left(\Gamma_1\left(s_0\right)\right) \geq 500R^{-1}_{max}=500.
     \end{align}

     Note   that
        $\phi_{T}\left(\Gamma_1\left(s_0\right)\right)=\Gamma_1 \left(s\right)$ and $\left(g\left(T\right), \Gamma_1\left(s_0\right)\right)$ is isometric to $\left(g, \Gamma_1\left(s\right)\right)$ for all $s\in[s_0,s_1]$.  Then
     \begin{align}\label{levelset-isometric}
         (M,R(\Gamma_1(s))g;\Gamma_1(s))&\cong (M,R(\Gamma_1(s_0),T)g(T);\Gamma_1(s_0))\notag\\
         &\cong (M,\frac{R(\Gamma_1(s_0),T)}{R(\Gamma_1(s_0))}R(\Gamma_1(s_0))g(T);\Gamma_1(s_0)).
     \end{align}
      Since
  $R(\Gamma_1(s_0),T)\leq R(\Gamma_1(s_0))$  by $(\ref{R-monotonicity})$,
  we get from (\ref{s0-close}),
     \begin{align}\label{s1-close1}
         &(M,R(\Gamma_1(s))g;\Gamma_1(s))\notag\\
         & \overset{\epsilon-\text{close}}{\sim} (N\times \mathbb{R},\frac{R(\Gamma_1(s_0),T)}{R(\Gamma_1(s_0))}h_{\Gamma_1(s_0)}(t)+ds^2; \Gamma_1(s_0)).
     \end{align}
    On the other hand,  there is another  $3d$ compact
    ancient $\kappa$-solution $h_{\Gamma_1(s_1)}(t)$ corresponding to   the point $\Gamma_1(s)$  such that
     \begin{align}\label{s1-close2}
         (M,R(\Gamma_1(s))g;\Gamma_1(s)) \overset{\epsilon-\text{close}}{\sim} (h_{\Gamma_1(s)}(0)+ds^2, \Gamma_1(s)).
     \end{align}
     Hence, combining $(\ref{s1-close1} )$ and $(\ref{s1-close2} )$, we derive
     \begin{align}\label{two-ancients}
         h_{\Gamma_1(s)}(0) \overset{\epsilon-\text{close}}{\sim} \frac{R(\Gamma_1(s_0),T)}{R(\Gamma_1(s_0))}h_{\Gamma_1(s_0)}(t).
     \end{align}

     By the convergence of $(M,g_p(t);p)$, we have
     $$\frac{R(\Gamma_1(s_0),T)}{R(\Gamma_1(s_0))} \overset{\epsilon-\text{close}}{\sim} R_h(\Gamma_1(s_0),t), ~\forall ~t\in [t_1, 0],$$
     and so,
     \begin{align*}
         {\rm{Diam}}(\frac{R(\Gamma_1(s_0),T)}{R(\Gamma_1(s_0))}h_{\Gamma_1(s_0)}(t))\overset{\epsilon-\text{close}}{\sim} {\rm{Diam}}(R_h(\Gamma_1(s_0),t)h_{\Gamma_1(s_0)}(t))).
      \end{align*}
     Then by (\ref{two-ancients}), the monotonicity  $(\ref{t0-t1-diameter})$ implies that
   \begin{align}\label{s-large}  F(\Gamma_1(s) )\geq \frac{2C'}{3}-2\epsilon>\frac{1}{2}C', ~ \forall ~s \in\left[s_0, s_1\right].
   \end{align}
     Moreover, by $(\ref{t1-diameter} )$,
     \begin{align}\label{F-estimate}   F\left(\Gamma_1(s_1)\right) > 2C'-2\epsilon>C'.
     \end{align}

    By (\ref{F-estimate})  together with (\ref{s-large})  and (\ref{s-1}),  we can   repeat the above argument  to obtain (\ref{noncompact-diameter}). On the other hand,    the curve $\Gamma_1(s)$ passes through level sets $f^{-1}(f(q_i))$  because of   $\lim_{s\rightarrow \infty} f( \Gamma_1(s) )=\infty$.  Thus for each $q_i$ ($i>>1$) there exists $p_i'\in f^{-1}(f(q_i))$ such that  $p_i'=\Gamma_1(s_i)$  for some $s_i$.   By $(\ref{compact-diameter})$,  $F(p_i')\leq C_3$, which contradicts to $(\ref{noncompact-diameter})$.  Hence,   the proposition is proved.
 \end{proof}

 \section{Proofs of the main results}

 In this section, we  prove Theorem \ref{main-theorem} and Corollary \ref{coro}.  Firstly,  we  consider a special case:   there is a uniform constant $C$  such that  all  split ancient $\kappa$-solution $h(t)$   in Proposition \ref{Dimension-reduction}  satisfies   $(\ref{bound-h})$.  By generalizing the argument in Section 3 we prove

 \begin{prop}\label{compact-limit-ancient-solution}
    Let $(M^n, g)$ $(n\ge 4)$  be an  $n$-dimensional    noncompact $\kappa$-noncollapsed   gradient steady Ricci soliton  with  $\rm{Rm}\geq 0$   and $\rm{Ric}> 0$ away from a compact set $K$ of $M$.    Suppose that  there is a uniform constant $C$  such that   all  split   ancient $\kappa$-solution $(N^{n-1}, h(t))$ of $(n-1)$-dimension  in Proposition \ref{Dimension-reduction}  satisfies $(\ref{bound-h})$.
         Then  every $h(t)$ must be  ancient $\kappa$-solutions of type \uppercase\expandafter{\romannumeral1}.  As a consequence,
          \begin{align}\label{N-split}
        &( N^{n-1}, h(t))\notag\\
     &   \cong (N_1,h_1(t))\times\cdots\times (N_k,h_k(t))\times (\hat N_1, \hat h_1(t))\times \cdots \times (\hat N_\ell, \hat h_\ell(t)),
    \end{align}
    where each  $(N_i,h_i(t))$ is a family of shrinking   quotients of a  closed symmetric space with nonnegative curvature operator, and each  $(\hat N_j, \hat h_j(t))$ is  a family of shrinking  round  quotient spheres.
\end{prop}

To prove Proposition \ref{compact-limit-ancient-solution},  we shall exclude the existence of   compact $\kappa$-noncollapsed ancient solutions  of type \uppercase\expandafter{\romannumeral2}.  Our proof   is based on   two lemmas below,
 which can be regarded as  higher   dimensional   versions  of \cite[Lemma 2.1, Lemma 2.2]{ABDS}, respectively.

\begin{lem}\label{noncompact-asymptotic-soliton}
    Let $(N^{n-1}, h(t))$ be  an $(n-1)$-dimensional   compact $\kappa$-solution of   type  \uppercase\expandafter{\romannumeral2}.
    Fix $p\in N$,  we consider $t_k\rightarrow -\infty$ and a sequence of points $x_k\in N$ such that $\ell(x_k,t_k)<\frac{n-1}{2}$, where $\ell$ denotes the reduced distance from $(p,0)$.  
    Then the rescaled manifold  by dilating  the manifold $(N, h(t_k))$ around the point $x_k$ by the factor $\frac{1}{\sqrt{-t_k}}$  converges to a noncompact shrinking gradient Ricci soliton.
\end{lem}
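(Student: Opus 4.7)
The plan is to execute Perelman's asymptotic shrinking soliton construction, with the type II hypothesis providing the extra input that forces the limit to be noncompact.

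First I would set up the parabolically rescaled Ricci flows
\[
\tilde h_k(t) := (-t_k)^{-1}\, h\bigl(t_k + (-t_k)\, t\bigr), \quad t \in (-\infty, 0),
\]
based at $x_k$, and verify that they have uniformly bounded geometry on compact parabolic sets. The hypothesis $\ell(x_k, t_k) < 3/2$, combined with Perelman's reduced volume monotonicity from $(p,0)$, keeps the reduced volume at $(x_k, t_k)$ bounded away from both $0$ and $(4\pi)^{3/2}$. Together with $\kappa$-noncollapsing and the standard local curvature estimates available on any $\kappa$-solution, this furnishes uniform $C^\infty_{loc}$ bounds on the $\tilde h_k$. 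Passing to a subsequence yields a pointed Cheeger-Gromov limit $(\bar N^3, \bar h(t); x_\infty)$, a complete $\kappa$-noncollapsed ancient Ricci flow.

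Next I would identify $(\bar N, \bar h(t))$ as a gradient shrinking Ricci soliton. The rescaled reduced distances $\tilde \ell_k$ converge, after a subsequence, to a locally Lipschitz limit $\bar \ell$, and the rescaled reduced volumes converge to the reduced volume of $(\bar N, \bar h(t))$. By Perelman's monotonicity, the limit reduced volume is constant in $t$, which forces the equality case of the monotonicity formula. That equality case is precisely the gradient shrinking soliton equation
\[
\bar R_{ij} + \bar\nabla_i \bar\nabla_j \bar f = \frac{1}{2(-t)}\, \bar g_{ij},
\]
so $(\bar N, \bar h(t); \bar f)$ is a shrinking gradient Ricci soliton.

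The last step, where the type II assumption enters, is to show that $\bar N$ is noncompact. By Theorem \ref{Brendle}, the compact type II solution $(N^3, h(t))$ is, up to reparametrization, a quotient of the Perelman solution, so the diameter estimate from (\ref{asymptotic-behaviour}) applies:
\[
\rm{Diam}(h(t_k)) \;\geq\; 2.1\,\sqrt{(-t_k)\,\log(-t_k)}.
\]
After rescaling by $(-t_k)^{-1/2}$ one obtains
\[
\rm{Diam}(\tilde h_k(0)) \;\geq\; 2.1\,\sqrt{\log(-t_k)} \;\to\; \infty,
\]
so the diameters of the rescaled compact manifolds diverge. A pointed Cheeger-Gromov limit of compact manifolds with diameters blowing up is necessarily noncompact, so $\bar N$ is noncompact, completing the proof.

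The main obstacle is the second step, namely the rigorous passage to the limit in Perelman's reduced distance and reduced volume and the extraction of the soliton equation from the equality case of the monotonicity formula; this is the substantive content that the lemma borrows from \cite{ABDS}. Once that is established, noncompactness follows routinely from the diameter bound in (\ref{asymptotic-behaviour}).
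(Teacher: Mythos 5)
Your first two steps (parabolic rescaling, reduced-volume monotonicity, extraction of a shrinking-soliton limit via Perelman's Section 11 argument) match the paper's, which simply cites Perelman's construction. Where you genuinely diverge is the noncompactness step. You invoke the classification Theorem \ref{Brendle} to identify $(N,h(t))$ with a quotient of the Perelman solution, and then deduce noncompactness from the diameter asymptotics $(\ref{asymptotic-behaviour})$: after rescaling by $(-t_k)^{-1/2}$ the diameters of $\tilde h_k(0)$ grow like $\sqrt{\log(-t_k)}$, which forces a noncompact Cheeger--Gromov limit. The paper instead argues by contradiction without ever touching the classification theorem here: if the limit soliton were compact it would be a quotient of the round sphere (Perelman, or Munteanu--Wang), hence $h(t_k)$ would have sectional curvatures pinched into $[\frac{c-\epsilon_k}{-t_k},\frac{c+\epsilon_k}{-t_k}]$ with $\epsilon_k\to0$, and then Hamilton's 3d pinching estimates would force $(N,h(t))$ to be a shrinking round quotient sphere, contradicting type II. The paper's route is more elementary and self-contained --- indeed the very next lemma explicitly notes that it is giving a ``direct proof'' precisely to avoid relying on $(\ref{Perelman-type2})$ and Theorem \ref{Brendle}. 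Your route is shorter but uses heavier machinery; it is logically legitimate (the paper itself records the identification with a Perelman quotient in the footnote), but note a small gap: $(\ref{asymptotic-behaviour})$ is stated for the universal Perelman solution $(S^3,g_{Pel}(t);p_0)$, and you need the observation (made later in the paper, before Proposition \ref{Perelman-T}) that the isometry group fixes the tip-to-tip geodesic, so quotients satisfy the same diameter lower bound. You should also account for the time translation and parabolic rescaling in $(\ref{align})$ when applying the diameter bound, though this does not affect the divergence.
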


\begin{proof}
    According to a classification  result of closed ancient $\kappa$-solutions  with nonnegative curvature operator (cf. \cite[Theorem 7.34]{CLN},  \cite{BW}), we have
    \begin{align}\label{M-split}
       & ( N,  h(t))\notag\\
       &\cong (N_1,h_1(t))\times\cdots\times (N_k,h_k(t))\times (\hat N_1, \hat h_1(t))\times \cdots \times (\hat N_\ell, \hat h_\ell(t)),
    \end{align}
 where each  $(N_i,h_i(t))$ is a family of   shrinking   quotients of an $(n-1)$-dimensional   closed symmetric space  with nonnegative curvature operator,  and   each  $(\hat N_j, \hat h_j(t))$  is an ancient flow with positive curvature operator on a quotient of $(n-1)$-dimensional   sphere.
    By Perelman's argument in \cite[Section 11]{P}, the rescaled   manifolds  converge in the Cheeger-Gromov sense to a  $\kappa$-noncollapsed  shrinking  gradient Ricci soliton   $(N', h'(t))$ with nonnegative curvature operator.
    If the limit soliton   $(N', h'(t))$  is compact,      the limit   $(N_j', h_j'(t))$  of
    $$(\hat N_j,(-t_k)^{-1}\hat h_j((-t_k) t); x_k)$$    is  also compact with nonnegative curvature operator.  Then  by the convergence,   $N_j'$ is    diffeomorphic to $\hat N_j$,  i.e.,  a quotient of  sphere.    Thus  again by the classification  of  $(N_j', h_j'(t))$  as in  (\ref{M-split}) for  $( N,  h(t))$,
we know that $(N_j', h_j'(t))$   is a shrinking Ricci soliton on a quotient sphere with  positive curvature operator. By \cite{BS},  it follows that $(N_j', h_j'(t))$ is  a family of  shrinking  round quotient spheres.
    As a consequence, the sectional curvature of $(\hat N_j, \hat h_j(\cdot, t_k))$ lies in the interval $[\frac{c_j-\epsilon_k}{-t_k},\frac{c_j+\epsilon_k}{-t_k}]$, where $\epsilon_k\rightarrow 0$ as $k\rightarrow \infty$.
     Hence,  by the curvature pinching estimate in \cite{BS},
      each  $(\hat N_j, \hat h_j(t))$ is  also a  family of shrinking round  quotient  spheres.  As a consequence,    $( N,  h(t))$ is of  type \uppercase\expandafter{\romannumeral1},   which contradicts to the type \uppercase\expandafter{\romannumeral2} condition.  Therefore, $(N', h'(t))$ must be noncompact.
    The lemma is proved.
\end{proof}



 \begin{lem}\label{type2-diameter}
     Let $(N^{n-1},h(t))$ be  an $(n-1)$-dimensional compact  ancient  $\kappa$-solution of  type \uppercase\expandafter{\romannumeral2}.
      Then  for any sequence $t_k\rightarrow -\infty$, it holds
     \begin{align*}
         \rm{R}_{min}(t_k)\rm{Diam}(h(t_k))^2\rightarrow \infty,
     \end{align*}
    where ${\rm R}_{min}(t)=\min\{R(h( \cdot, t)\}$.
    Consequently,
     \begin{align}\label{infty-diam}
         \lim_{t\rightarrow -\infty} \rm{R}_{min}(t)\rm{Diam}(h(t))^2\rightarrow \infty.
     \end{align}

 \end{lem}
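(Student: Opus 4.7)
The plan is to combine Lemma \ref{noncompact-asymptotic-soliton} with the classification of $3d$ noncompact shrinking $\kappa$-solitons in order to extract, along the given sequence, simultaneous lower bounds on the diameter and on the scalar curvature at a common base point. Fix $(p,0)$ as the base point of Perelman's reduced distance $\ell$. For any sequence $t_k\to -\infty$, Perelman's standard estimate (that $\ell(\cdot,\tau)$ has a point of value at most $n/2=3/2$ for each $\tau>0$) selects $x_k\in N$ with $\ell(x_k,-t_k)\le 3/2$, so the hypotheses of Lemma \ref{noncompact-asymptotic-soliton} are satisfied along $\{(x_k,t_k)\}$.

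By Lemma \ref{noncompact-asymptotic-soliton}, the rescaled pointed flows $(N, (-t_k)^{-1}h(t_k), x_k)$ converge in the Cheeger-Gromov sense to a $3d$ $\kappa$-noncollapsed noncompact shrinking gradient Ricci soliton with nonnegative curvature operator. By the Hamilton-Perelman classification in dimension three, the only such solitons are the round cylinder $\mathbb{S}^2\times\mathbb{R}$ and its $\mathbb{Z}_2$-quotient; the Gaussian shrinker $\mathbb{R}^3$ is excluded because a type II ancient $\kappa$-solution has asymptotic reduced volume strictly less than $(4\pi)^{3/2}$. Hence the limit is a (possibly $\mathbb{Z}_2$-quotient of the) round cylinder, and its scalar curvature at every point is a fixed positive constant $c_0>0$.

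Two immediate numerical consequences then follow. First, noncompactness of the limit forces balls of arbitrarily large radius in the rescaled metric to embed into $N$, so
\[
\frac{{\rm Diam}(h(t_k))^2}{-t_k} \;=\; {\rm Diam}\bigl((-t_k)^{-1}h(t_k)\bigr)^2 \;\longrightarrow\; \infty.
\]
Second, Cheeger-Gromov convergence of scalar curvatures at the base point yields $(-t_k)R(x_k,t_k)\to c_0>0$, and in particular $R(x_k,t_k)\ge c_0/(2(-t_k))$ for all large $k$. Combining these two facts,
\[
R_{\max}(t_k)\,{\rm Diam}(h(t_k))^2 \;\ge\; R(x_k,t_k)\,{\rm Diam}(h(t_k))^2 \;\ge\; \frac{c_0}{2}\cdot\frac{{\rm Diam}(h(t_k))^2}{-t_k} \;\longrightarrow\;\infty,
\]
which proves the first assertion of the lemma; since the sequence was arbitrary, the full limit statement $(\ref{infty-diam})$ follows at once.

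The main obstacle I expect is the classification step: identifying the blow-down soliton as a (quotient of the) round cylinder rather than the Gaussian $\mathbb{R}^3$. This rests on Perelman's asymptotic reduced volume bound combined with the non-flatness forced by the type II hypothesis—essentially the same mechanism already used implicitly in the proof of Lemma \ref{noncompact-asymptotic-soliton}. Once this identification is in place, the remainder of the argument is routine Cheeger-Gromov bookkeeping on scales.
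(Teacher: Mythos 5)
Your argument is correct and follows essentially the same route as the paper: both select base points $x_k$ with $\ell(x_k,-t_k)\le 3/2$, invoke Lemma~\ref{noncompact-asymptotic-soliton} to get a noncompact blow-down shrinker, deduce $(-t_k)^{-1}{\rm Diam}(h(t_k))^2\to\infty$ from noncompactness, and obtain a lower bound on $(-t_k)R(x_k,t_k)$ at the base point to finish. The only difference is that you invoke the full Hamilton--Perelman classification of $3d$ noncompact $\kappa$-noncollapsed shrinkers to identify the limit as a (quotient) cylinder, whereas the paper needs only the weaker statement that the asymptotic soliton is non-flat (citing \cite[Proposition 39.1]{KL}), which already gives $(-t_k)R(x_k,t_k)\ge\delta>0$ for large $k$; your detour through the classification is harmless but not needed.
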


 \begin{proof}
  By a result of Perelman \cite[Section 11]{P}, for any sequence $t_k\rightarrow -\infty$, we can always find a sequence of points $x_k\in N$ such that $\ell(x_k,t_k)\leq \frac{n-1}{2}$ for each $k$. Then by Lemma \ref{noncompact-asymptotic-soliton}, the rescaled Ricci flows $(N,(-t_k)^{-1}h((-t_k) t); x_k)$ converge to a noncompact shrinking Ricci soliton  $(N', h'(t))$ $(t<0)$ with nonnegative curvature operator.  It follows
 \begin{align}\label{diam-large}
         {\rm Diam}(  (-t_k)^{-1} h(t_k))=
        (-t_k)^{-\frac{1}{2}}{\rm Diam}(h(t_k))
         \to \infty.
     \end{align}
    Moreover,  such a limit soliton   is non-flat \cite[Proposition 39.1]{KL} and so $R(h'(\cdot, t))>0$ for any $t<0$  (cf. \cite{Ch}).   Thus  there exists a uniform constant $\delta>0$, such that $(-t_k) R(x_k,t_k)\geq \delta$ for all $k>>1$.
       Hence, by (\ref{diam-large}), we derive
     \begin{align}\label{Rmax-infty}
         {\rm R}_{max}(t_k){\rm Diam}(h(t_k))^2&\geq {\rm R}(x_k,t_k){\rm Diam}(h(t_k))^2\notag\\
         &\geq \delta (-t_k)^{-1}{\rm Diam}(h(t_k))^2\notag\\
         &\to \infty,
     \end{align}
      where ${\rm R}_{max}(t)=\max\{R(h( \cdot, t)\}$.

 Next  we shall  improve  (\ref{Rmax-infty}) to (\ref{infty-diam}).   Suppose  that   (\ref{infty-diam})  fails.  Then there exist a constant $C$ and a sequence of times $t_k'\to-\infty$,  such that
 \begin{align}\label{diameter-bound-ancient}
     {\rm R}_{min}(t_k'){\rm Diam}(h(t_k'))^2\leq C.
 \end{align}
 Choose $p_k$ such that  ${\rm R}_{min}(t_k')=\min\{R(h( p_k, t_k')\}$.  By  the compactness  of ancient $\kappa$-solutions (cf. \cite[Theorem 3.3]{DZ-TAMS}, \cite[Theorem 20.9]{RF-part3}),  rescaled flows $(N, h_k(t);p_k)$, where $h_k(t)=R(p_k,t_k')h(t_k+R(p_k,t_k')^{-1}t)$, converge subsequently to an ancient $\kappa$-solution $(N_\infty, h_\infty(t); p_\infty)$. By $(\ref{diameter-bound-ancient})$,  we see that $(N_\infty, h_\infty(t); p_\infty)$ is a closed ancient $ \kappa$-solution.  Thus  $R_\infty(\cdot,0)$ is uniformly  bounded on $N_\infty$.

 On the other hand,  by  (\ref{Rmax-infty}) and  (\ref{diameter-bound-ancient}),
  \begin{align}\label{curvature-ratio-infty}
     \frac{R(q_k,t_k')}{R(p_k,t_k')}\to \infty,
 \end{align}
 where  $q_k$ is chosen as  ${\rm R}_{max}(t_k')=\max\{R(h( q_k, t_k')\}$.
 Note
$$\lim_k\frac{R(q_k,t_k')}{R(p_k,t_k')}=R_\infty(q_\infty, 0), $$
 where $q_\infty\in N_\infty$ is a limit of  $q_k$.  Thus  $R_\infty(q_\infty,0)=\infty$, which is impossible!  The lemma is proved.

\begin{rem}   (\ref{infty-diam})  also comes from   (\ref{Perelman-type2}) and the classification  Theorem \ref{Brendle} when $n=4$.
\end{rem}

  \end{proof}

 \begin{proof}[Proof of  Proposition \ref{compact-limit-ancient-solution}]
     If the proposition is false,  there will  exist a sequence of rescaled flows  $(M,g_{p_i}(t),p_i)$, which  converge   subsequently to a splitting Ricci flow $(N\times\mathbb{R},h(t)+ds^2; p_\infty)$, where $(N, h(t))$ is an  $(n-1)$-dimensional compact  ancient $ \kappa$-solution of type \uppercase\expandafter{\romannumeral2}.
Choose $t_k\to -\infty$. by Lemma \ref{type2-diameter},
    \begin{align}\label{diameter-to-infty}
        \lim_{k\rightarrow\infty}{\rm Diam}(h(t_k))R_h^{\frac{1}{2}}(p_\infty,t_k)\geq\lim_{k\rightarrow\infty}{\rm Diam}(h(t_k))R_{h,min}^{\frac{1}{2}}(t_k)=\infty.
    \end{align}
    Thus   there is  $t_0\in\{t_k\}$ for some $k_0$ such that
        \begin{align}\label{type2-increasing}
            {\rm Diam}(h(t_0))R_h^{\frac{1}{2}}(p_\infty,t_0)>100C.
        \end{align}

Set $T=t_{0}R^{-1}(p_i)$ and choose  $\epsilon<-\frac{1}{100t_{0}}$.
  Then
  $(M,g_{p_i}(t);p_i)$ is $\epsilon$-close to $(N\times \mathbb{R},h(t)+ds^2; p_{\infty})$ when $i>>1$.  Thus by (\ref{type2-increasing}) and the convergence of  $g_{p_i}(t)$,  we get
      \begin{align}\label{qi-sequence-large}
         {\rm Diam}(\bar g(T))R^\frac{1}{2}(p_i,T)= {\rm Diam}(\bar g_{p_i}(t_{0}))R_{p_i}^\frac{1}{2}(p_i,t_{0})
          >90C,
      \end{align}
      as long as  $i>>1$, where $\bar g(T)$ is the induced metric of $g$ on $f^{-1}(f(\phi_T(p_i)))$ and $\bar g_{p_i}(t)$ is the induced metric of $g_{p_i}(t)$ on $f^{-1}(f(p_i))$, respectively.
      It turns
      \begin{align}\label{level-diam}
          {\rm Diam}(R(\phi_T(p_i))\bar g(T))&={\rm Diam}(\bar g(T))R^\frac{1}{2}(\phi_T(p_i))\notag\\
          &={\rm Diam}(\bar g(T))R^\frac{1}{2}(p_i,T)\notag\\
          &>90C,~ i>>1.
      \end{align}

      Let $(\tilde N, \tilde h(t))$ be an $(n-1)$-dimensional split ancient  flow as the limit of  $( M, g_{(\phi_T(p_i))}(t); \phi_T(p_i))$.   Then by  Remark \ref{N-levelset}, $(\tilde N, \tilde h(0))$
     is a limit of submanifolds
     $$( f^{-1}(f(\phi_T(p_i))), R(\phi_T(p_i))\bar g(T); \phi_T(p_i)).$$
       Thus by (\ref{level-diam}), we get
     $${\rm Diam}(\tilde N, \tilde h(0))\ge 90C.$$
     It follows
       $$F(\phi_{T}(p_i))>50C, ~ i>>1. $$
       So
     \begin{align}\label{max-diameter-T1}
         \limsup_{i\rightarrow \infty}F(\phi_{T}(p_i))\geq 50C.
     \end{align}
    However,     by the condition of proposition,    we see
     \begin{align}\label{max-diameter}
         \limsup_{p\rightarrow \infty}F(p)<2C,
     \end{align}
    which contradicts to  $(\ref{max-diameter-T1})$.  Hence, $(N, h(t))$ must be  of type \uppercase\expandafter{\romannumeral2}.
   By   \cite{Ni},   we also get (\ref{N-split}).
The proposition is proved.
 \end{proof}

Now we are able to  prove Theorem \ref{main-theorem} by Proposition \ref{compact-limit-ancient-solution} together with Proposition \ref{noncompact-ancient-solution}.

 \begin{proof}[Proof of Theorem \ref{main-theorem}]
     Case 1:
     $$\limsup_{p\rightarrow \infty}F_\epsilon(p)<C. $$
     for any $\epsilon<1$.  Then  $(\ref{bound-h})$ holds  for all split  ancient $\kappa$-solution $h(t)$.  Thus by  (\ref{N-split}) in case of $n=3$,  all $h(t)$ must be a family of $3d$ shrinking round quotient spheres.

     Case 2:
     $$\limsup_{\epsilon\to 0}\limsup_{p\rightarrow \infty}F_\epsilon(p)=\infty.$$
     In this case, by taking a diagonal subsequence, there is a sequence of pointed flows $(M,g_{q_i}(t);q_i)$, which  converges  subsequently to a splitting Ricci flow $(N'\times\mathbb{R}, h'(t)+ds^2; q_\infty)$ for some noncompact ancient $\kappa$-solution $h'(t)$. Then by Proposition \ref{noncompact-ancient-solution},  $(N, h(t))$ is  also  noncompact from  any splitting  limit flow $(N\times\mathbb{R}, h(t)+ds^2; p_\infty)$ of $(M,g_{p_i}(t);p_i)$.
 \end{proof}

 \begin{proof}[Proof of Corollary \ref{coro}] By the assumption,  the  $3d$ split ancient flow
  $(N,h(t))$ of limit of $(M,g_{p_i}(t),p_i)$ is  a family of shrinking  round quotient spheres. Namely,  $(N,h(0))$ is a quotient  of
 round  sphere. We need to show that  $(M,g)$ has positive Ricci curvature on $M$.

  On the contrary,   ${\rm Ric}(g)$   is not strictly positive.   We note that   $(\ref{pi-decay})$ is still true in the proof of  Lemma \ref{compact-R-decay} without  ${\rm Ric}(g)>0$ away from a compact set of $M$.
   Then as in the proof of \cite[Lemma 4.6]{DZ-JEMS}, we see that  $X_i=R(p_i)^{-\frac{1}{2}}\nabla f\to X_\infty$ w.r.t.  $(M,g_{p_i}(t),p_i)$,  where $X_\infty$ is a non-trivial parallel vector field.  Thus  according to the argument  in  the proof of \cite [Theorem 1.3]{DZ-JEMS},    the universal cover of $(N,h(t)) $ must split off a flat factor $\mathbb{R}^d$ ($d\geq 1$).  However, the universal cover of $N$ is $S^3$.  This is a contradiction!  Hence, we conclude that ${\rm Ric}(g)>0$ on $M$.

  Now we can apply   Theorem \ref{main-theorem} to see that  any  $3d$  split ancient flow  $(N',h'(t))$ of limit of $(M,g_{q_i}(t),q_i)$ is  a family of shrinking  round  quotient spheres.
   We claim:   $(N',h'(t))$ is  in fact a  family of shrinking round spheres.

By Lemma \ref{compact-R-decay},  the scalar curvature of  $(M, g)$ decays to zero uniformly.  Then $(M, g)$ has unique equilibrium point $o$ by the fact  ${\rm Ric}(g)>0$.   Thus  the level set $\Sigma_r=\{f(x)=r\}$  is a closed manifold for any $r>0$, and it is diffeomorphic to $S^3$ (cf. \cite[Lemma 2.1]{DZ-JEMS}).

  On the other hand,   as in the proof of Lemma \ref{compact-ancient-solution},     the level set
     $(\Sigma_{f(q_i)},  \bar g_{q_i};  q_i)$
       converges  subsequently to  $(N', h'(0); q_\infty)$
       w.r.t.  the induced metric  $\bar g_{q_i}$  on $\Sigma_{f(q_i)}$  by  $g_{q_i}$.  Since each $\Sigma_{f(q_i)}$ is diffeomorphic to $S^3$,    $N'$ is also  diffeomorphic to $S^3$. Thus   $(N',h'(t))$ is a  family of shrinking round spheres.

      By the above claim,
   the  condition (ii)  in Definition \ref{asymptotically-cylindrical} is satisfied. Thus  by \cite[Lemma 6.5]{DZ-SCM},  $(M,g)$ is asymptotically cylindrical.  It follows that  $(M,g)$ is isometric to the Bryant Ricci soliton up to scaling by \cite{Bre-high}. Hence, we have shown the first part of   corollary.

   By the above argument, we see that   all $3d$ split limit flow  $(N', h'(t))$ must be a   family of shrinking round spheres,  and so  $(M,g)$ is   isometric to the Bryant Ricci soliton up to scaling, if  ${\rm Ric}(g)>0$ on $M$. Thus the  corollary holds.  On the other hand, if  ${\rm Ric}(g)$ is not strictly positive on $M$, by the splitting  result (cf. \cite{GLX}),  the universal covering of $(M, g)$ must split off a line. Since  $(M, g)$ is $\kappa$-noncollapsed, by  the classification  result of Brendle  for $3d$ $\kappa$-noncollapsed  steady Ricci soliton  \cite{Bre-3d},  $(M, g)$ should be a flat metric on $\mathbb R^4$ or
$M=\tilde N\times \mathbb R$, where  $\tilde N$ is the $3d$ Bryant  Ricci soliton up to scaling. However, in these two cases,  all $3d$ split limit flow  $(N', h'(t))$ by  Proposition \ref{Dimension-reduction}  will also  split out a line.  This  is impossible,  since  there is  a   compact  split  limit flow  $(N, h(t))$ by the assumption!   Hence,  ${\rm Ric}(g)>0$ on $M$ and  the proof  is  complete.

  \end{proof}

Based on  Proposition \ref{compact-limit-ancient-solution} we propose the following conjectures  for general  dimensional case   of Theorem \ref{main-theorem}  and  Corollary \ref{coro}, respectively.

 \begin{conj}\label{high-Th}
  Let $(M^n ,  g)$  ($n\ge 4)$ be an $n$-dimensionl  noncompact $\kappa$-noncollapsed  steady  gradient   Ricci soliton with  $\rm{Rm}\geq 0$   and $\rm{Ric}> 0$ away from a compact set $K$ of $M$.   Let $p_i\rightarrow \infty$ be any  sequence in $M$  and  $ \bar g(t)= h(t) +ds^2$  the   splitting limit flow of    $(M,g_{p_i}(t); p_i)$  as in (\ref{splitt-solution}).  Then either all  $ h(t)$ is an  $(n-1)$-dimensional  compact ancient $\kappa$-solution, which satisfies (\ref{N-split}),   or   each  $ h(t)$  is a  noncompact  ancient $\kappa$-solution of $(n-1)$-dimension.

 \end{conj}

 \begin{conj}\label{high-Coro}

   Let $(M^n, g)$ ($n\ge 4)$  be an   $n$-dimensionl   noncompact $\kappa$-noncollapsed  steady  gradient   Ricci soliton
     with   nonnegative sectional curvature.    Suppose  that there exists a sequence of rescaled flows  $(M,g_{p_i}(t); p_i)$ of  $(M,g)$
which   converges  subsequently to  an  $(n-1)$-dimensianl    compact split limit flow $(N, h(t))$ as in (\ref{splitt-solution}). Then  $(M, g)$ is isometric to  the  Bryant  Ricci soliton of dimension $n$  up to scaling.

 \end{conj}

\newpage

  \end{document}